\documentclass[10pt, a4paper, reqno]{amsart}
\usepackage[foot]{amsaddr}

\usepackage{amsthm,mathrsfs,amsmath,amscd,enumerate,enumitem, amscd, xcolor, url}
\usepackage[colorlinks]{hyperref}
\usepackage[bbgreekl]{mathbbol}
\usepackage{amsfonts}
\usepackage{amssymb}
\usepackage[utf8]{inputenc}
\usepackage{mathtools}

\DeclareSymbolFontAlphabet{\amsbb}{bbold}
\DeclareSymbolFontAlphabet{\mathbb}{AMSb}%

% ---------------------------------------------------
\newtheorem{thm}{Theorem}[section]

\newtheorem{lem}[thm]{Lemma}

\theoremstyle{definition}

\theoremstyle{remark}
\newtheorem{rem}[thm]{Remark}

%---------------------------------------------------------
\DeclareMathOperator{\supp}{supp}

\newcommand{\BMO}{\textup{BMO}}

%%% ------------------------------------------------------
\numberwithin{equation}{section}
\allowdisplaybreaks

\setlist[enumerate,1]{label=(\alph*)}

%----------------------------------------------------------

\begin{document}
    \title[Endpoint estimates for operators in the Laguerre setting]
    {Endpoint estimates for harmonic analysis operators associated with Laguerre polynomial expansions}

    \author[J. J. Betancor]{Jorge J. Betancor$^1$}
    \address{$^1$Departamento de An\'alisis Matem\'atico, Universidad de La Laguna,\newline
        Campus de Anchieta, Avda. Astrof\'isico S\'anchez, s/n,\newline
        38721 La Laguna (Sta. Cruz de Tenerife), Spain}
    \email{$^1$jbetanco@ull.es}

    \author[E. Dalmasso]{Estefan\'ia Dalmasso$^{2,*}$}
    \address{$^2$Instituto de Matem\'atica Aplicada del Litoral, UNL, CONICET, FIQ.\newline Colectora Ruta Nac. Nº 168, Paraje El Pozo,\newline S3007ABA, Santa Fe, Argentina}
    \email[Corresponding Author]{$^*$edalmasso@santafe-conicet.gov.ar \textnormal{(Corresponding  author)}}

    \author[P. Quijano]{Pablo Quijano$^{2,\dagger}$}
    \email{$^{\dagger}$pquijano@santafe-conicet.gov.ar}

    \author[R. Scotto]{Roberto Scotto$^3$}
    \address{$^3$Universidad Nacional del Litoral, FIQ.\newline Santiago del Estero 2829,\newline S3000AOM, Santa Fe, Argentina}
    \email{$^3$roberto.scotto@gmail.com}

    \date{\today}
    \subjclass{42B15, 42B20, 42B25, 42B35}

    \keywords{Hardy spaces, BMO spaces, endpoint estimates, Laguerre polynomials}

    %%% ----------------------------------------------------------------------
    \begin{abstract}
        In this paper we give a criterion to prove boundedness results for several operators from $H^1((0,\infty),\gamma_\alpha)$ to $L^1((0,\infty),\gamma_\alpha)$ and also from $L^\infty((0,\infty),\gamma_\alpha)$ to $\BMO((0,\infty),\gamma_\alpha)$, with respect to the probability measure $d\gamma_\alpha (x)=\frac{2}{\Gamma(\alpha+1)} x^{2\alpha+1} e^{-x^2} dx$ on $(0,\infty)$ when ${\alpha>-\frac12}$. We shall apply it to establish endpoint estimates for Riesz transforms, maximal operators, Littlewood-Paley functions, multipliers of Laplace transform type, fractional integrals and variation operators in the Laguerre setting.
    \end{abstract}
    %%% ----------------------------------------------------------------------
    \maketitle
    %%% ----------------------------------------------------------------------

\section{Introduction and main results}

We will consider harmonic analysis operators associated with Laguerre polynomial expansions and establish some endpoint estimates for them. More precisely, we shall prove their boundedness from $H^1((0,\infty),\gamma_\alpha)$ to $L^1((0,\infty),\gamma_\alpha)$ and from $L^\infty((0,\infty),\gamma_\alpha)$ to $\BMO((0,\infty),\gamma_\alpha)$, where $d\gamma_\alpha (x)=\frac{2}{\Gamma(\alpha+1)} x^{2\alpha+1} e^{-x^2} dx$ on $(0,\infty)$, for $\alpha>-\frac12$. 

We first recall the definitions and the main properties of Hardy and BMO spaces in our setting. We introduce the function $m(x)=\min\left\{1,\frac{1}{x}\right\}$,~$x\in (0,\infty)$. Given $a>0$, an interval $(x-r,x+r)$, with $0<r\le x$, is said to be in~$\mathcal{B}_a$ when $0<r\le am(x)$. We denote $I(x,r):=(x-r,x+r)\cap (0,\infty)$, for every $x,r\in (0,\infty)$. Clearly, the measure $\gamma_\alpha$ is not doubling but it has the doubling property on the intervals of each family $\mathcal{B}_a$, that is, there exists $C_a>0$ for which
\[
\gamma_\alpha(I(x,2r))\le C_a\gamma_\alpha(I(x,r)),
\]
provided that $I(x,r)\in \mathcal{B}_a$.

Let $1<q\le \infty$. We say that a measurable function $b$ defined on $(0,\infty)$ is an $(a,q,\alpha)$-atom when $b(x)=1$ for every $x\in (0,\infty)$, or there exists $0<r_0\le x_0$ such that $I(x_0,r_0)\in \mathcal{B}_a$ and the following properties are satisfied:
\begin{enumerate}[label=(\roman*)]
    \item $\supp(b)\subset I(x_0,r_0)$;
    \item $\|b\|_{L^q((0,\infty),\gamma_\alpha)}\le \gamma_\alpha(I(x_0,r_0))^{\frac{1}{q}-1}$, being $\frac{1}
{q}=0$ when $q=\infty$;
    \item $\int_0^\infty b(y)d\gamma_\alpha(y)=0$.
\end{enumerate}
A function $f\in L^1((0,\infty),\gamma_\alpha)$ is said to be in $H^{1,q}_a((0,\infty),\gamma_\alpha)$ when $f=\sum_{j=0}^\infty \lambda_jb_j$ where, for every $j\in \mathbb{N}$, $b_j$ is an $(a,q,\alpha)$-atom and $\lambda_j\in \mathbb{C}$ being $\sum_{j=0}^\infty|\lambda_j|<\infty.$ If $f\in H^{1,q}_a((0,\infty),\gamma_\alpha)$ we define
\[
\|f\|_{H^{1,q}_a((0,\infty),\gamma_\alpha)}= \inf\,\sum_{j=0}^\infty|\lambda_j|.
\]
Here the infimum is taken over all the sequences $\{\lambda_j\}_{j\in \mathbb{N}}$ of complex numbers such that $\sum_{j=0}^\infty|\lambda_j|<\infty$ and $f=\sum_{j=0}^\infty \lambda_jb_j$, where $b_j$ is an $(a,q,\alpha)$-atom, for every $j\in\mathbb{N}$. The space $H^{1,q}_a((0,\infty),\gamma_\alpha)$ is endowed with the topology associated with the norm $\|\cdot\|_{H^{1,q}_a((0,\infty),\gamma_\alpha)}$. Thus, $H^{1,q}_a((0,\infty),\gamma_\alpha)$ is a Banach space.

These Hardy spaces $H^{1,q}_a((0,\infty),\gamma_\alpha)$ were introduced and studied in \cite{BDQS-2}, where the authors gave a characterization of $H^{1,q}_a((0,\infty),\gamma_\alpha)$ by using a local maximal function. The space $H^{1,q}_a((0,\infty),\gamma_\alpha)$ actually does not depend on $a$ and $q$ (\cite[Theorem~1.1]{BDQS-2}). To simplify, we write  $H^1((0,\infty),\gamma_\alpha)$ instead of $H^{1,q}_a((0,\infty),\gamma_\alpha)$, $a>0$ and $1<q\le \infty$.

We shall now define our $\BMO$ space. Let $a>0$. We say that a function ${f\in L^1((0,\infty),\gamma_\alpha)}$ is in $\BMO_a((0,\infty),\gamma_\alpha)$ when
\[
\|f\|_{*,a,\alpha}:= \sup_{I\in \mathcal{B}_a}\frac{1}{\gamma_\alpha(I)}\int_I|f(y)-f_I|d\gamma_\alpha(y)<\infty,
\]
where $f_I:=\frac{1}{\gamma_\alpha(I)}\int_If(y)d\gamma_\alpha(y)$, for each $I\in \mathcal{B}_a$. We also define, for every ${f\in \BMO_a((0,\infty),\gamma_\alpha)}$,
\[
\|f\|_{\BMO_a((0,\infty),\gamma_\alpha)}=\|f\|_{L^1((0,\infty),\gamma_\alpha)}+\|f\|_{*,a,\alpha}.
\]
The space $\left(\BMO_a((0,\infty),\gamma_\alpha),\|\cdot\|_{\BMO_a((0,\infty),\gamma_\alpha)}\right)$ is a Banach space when the functions differing by a constant are identified.

As it happens with the Hardy space, the $\BMO_a((0,\infty),\gamma_\alpha)$ space does not depend on the parameter $a$. Hence, we may write $\BMO((0,\infty),\gamma_\alpha)$ instead of $\BMO_a((0,\infty),\gamma_\alpha)$, and $\|\cdot\|_{*,\alpha}$ instead of $\|\cdot\|_{*,a,\alpha}$.

We now state the main properties of the space $\BMO((0,\infty),\gamma_\alpha)$ that can be proved as the corresponding properties for the $\BMO$ space associated with the Gaussian measure (see \cite{MM}).

\begin{thm} \label{ThBMO} 
Let $\alpha>-\frac12$.
\begin{enumerate}
    \item \label{dual} The dual space of $H^1((0,\infty),\gamma_\alpha)$ can be identified with $\BMO((0,\infty),\gamma_\alpha)$.
    \item \label{JNineq} (John-Nirenberg type inequality) There exist $c,C>0$ such that, for every $f\in \BMO((0,\infty),\gamma_\alpha)$ and every $I\in \mathcal{B}_1$,
\[\gamma_\alpha\left(\{x\in I:\,|f(x)-f_I|>\lambda\}\right)\le C \exp
 \left(-\frac{c\lambda}{\|f\|_{*,\alpha}}\right)\gamma_\alpha(I).\]
\end{enumerate}
\end{thm}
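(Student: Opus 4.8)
The plan is to mirror the arguments of Mauceri and Meda \cite{MM} for the Gaussian measure, the essential structural input being the local doubling property of $\gamma_\alpha$ on each family $\mathcal{B}_a$ recorded above. I would establish the John--Nirenberg inequality first and then deduce the duality from it.

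For the John--Nirenberg inequality, fix $f\in\BMO((0,\infty),\gamma_\alpha)$ with $\|f\|_{*,\alpha}=1$ (by homogeneity) and fix $I\in\mathcal{B}_1$. The idea is a localized Calderón--Zygmund stopping-time argument carried out entirely inside $I$. At height $\lambda$ one subdivides $I$ dyadically and selects the maximal subintervals $I_j$ on which the mean of $|f-f_I|$ exceeds a fixed threshold $K$. The first point to check is that these subintervals remain admissible: if $J=I(y,s)\subset I(x_0,r_0)=I$, then $s\le r_0\le m(x_0)$, and since $m(x)\approx m(y)$ whenever $|x-y|\le m(x)$ (a direct consequence of $m(x)=\min\{1,1/x\}$), one gets $J\in\mathcal{B}_{a}$ for a universal $a$; hence the doubling property applies with a constant independent of $I$. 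This gives $|f_{I_j}-f_I|\le CK$ and $\sum_j\gamma_\alpha(I_j)\le\theta\,\gamma_\alpha(I)$ with $\theta<1$. Iterating the selection on each $I_j$ produces, after $n$ steps, a family covering $\{x\in I:\ |f(x)-f_I|>nCK\}$ up to a null set, with total measure at most $\theta^n\gamma_\alpha(I)$; choosing $n\sim\lambda$ yields the stated exponential decay.

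The main obstacle is precisely this local Calderón--Zygmund machinery. Because $\gamma_\alpha$ is not doubling, one cannot invoke the classical covering lemmas globally; everything must be confined to admissible intervals, and one has to verify that the dyadic descendants of an admissible interval stay admissible with a uniform constant and that the doubling constant does not degenerate along the iteration. Controlling the interplay between the shifting centers and the weight $m$ is the delicate bookkeeping.

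Granting the John--Nirenberg inequality, the duality follows along standard lines. Given $g\in\BMO((0,\infty),\gamma_\alpha)$, the functional $f\mapsto\int_0^\infty fg\,d\gamma_\alpha$ is bounded on $H^1$: for an $(a,q,\alpha)$-atom $b$ supported on $I\in\mathcal{B}_a$ the cancellation property gives $\int_0^\infty bg\,d\gamma_\alpha=\int_0^\infty b\,(g-g_I)\,d\gamma_\alpha$, and Hölder together with the $L^{q'}$-estimate for $g-g_I$ furnished by John--Nirenberg yields $\bigl|\int_0^\infty bg\,d\gamma_\alpha\bigr|\le C\|g\|_{*,\alpha}$, while the constant atom is controlled using $g\in L^1$ and the fact that $\gamma_\alpha$ is a probability measure. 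Conversely, given a bounded functional $L$ on $H^1$, one restricts $L$ to the mean-zero subspace of $L^q(I,\gamma_\alpha)$ for each $I\in\mathcal{B}_1$; since such functions embed continuously into $H^1$, the $L^q$--$L^{q'}$ duality produces a representing function $g_I$ on each $I$, and a consistency argument over an exhausting sequence of admissible intervals glues these into a single $g\in\BMO((0,\infty),\gamma_\alpha)$ with $\|g\|_{*,\alpha}\le C\|L\|$. Checking that the two assignments are mutually inverse modulo constants completes the identification.
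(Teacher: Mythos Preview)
Your proposal is correct and matches the paper's approach: the paper does not give a detailed proof of this theorem but simply states that both properties ``can be proved as the corresponding properties for the $\BMO$ space associated with the Gaussian measure (see \cite{MM}),'' which is exactly the strategy you outline. Your sketch supplies more detail than the paper itself, and the key technical point you identify---that dyadic descendants of an interval in $\mathcal{B}_1$ remain in some $\mathcal{B}_a$ with a uniform $a$, so that the local doubling constant does not degenerate---is indeed the only place where the argument of \cite{MM} needs to be checked in this setting.
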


The key result for the aforementioned endpoint estimates for the harmonic analysis operators associated with Laguerre expansions will be a boundedness criterion, in the spirit of a similar result given in \cite{MM} on the Gaussian setting.

We shall be dealing with the following two classes of operators $T$, namely:
\begin{enumerate}[label=(\Roman*)]
    \item $T$ is a linear operator defined on  $L^2((0,\infty),\gamma_\alpha)$ into the space of measurable functions on $(0,\infty)$ such that, for a certain measurable function $K:((0,\infty)\times (0,\infty))\setminus D\rightarrow \mathbb{C}$, where $D=\{(x,x):x\in (0,\infty)\}$,
\[
T(f)(x)=\int_0^\infty K(x,y)f(y)d\gamma_\alpha(y),\quad\,x\in (0,\infty)\setminus \supp(f),
\]
with $f\in L^2((0,\infty),\gamma_\alpha)$.
\item For every $t>0$, $T_t$ is a linear operator defined on  $L^2((0,\infty),\gamma_\alpha)$ into the space of measurable functions such that, for a certain measurable function $K_t:(0,\infty)\times (0,\infty)\rightarrow \mathbb{C}$,
\[
T_t(f)(x)=\int_0^\infty K_t(x,y)f(y)d\gamma_\alpha(y),\quad\,x\in (0,\infty),
\]
where $f\in L^2((0,\infty),\gamma_\alpha)$. Suppose now that $(X,\|\cdot\|_X)$ is a Banach space of measurable complex functions defined on $(0,\infty)$, that the function
\begin{align*}
((0,\infty)\times (0,\infty))\setminus D &\rightarrow X,\\
(x,y)&\mapsto K_{(\cdot)}(x,y),    
\end{align*}
is $X$-strongly measurable and that, for every $f\in L^\infty((0,\infty),\gamma_\alpha)$, the integral $\int_0^\infty \|K_{(\cdot)}(x,y)\|_Xf(y)d\gamma_\alpha(y)<\infty$, for every $x\in (0,\infty)\setminus \supp(f)$. The operator $T$ is defined by
\[
T(f)=\|T_t(f)\|_X, \quad\,f\in L^2((0,\infty),\gamma_\alpha).
\]
\end{enumerate}
Note that (II)-type operators are special cases of the so-called almost linear operators.

The boundedness criterion can be now stated.

\begin{thm}\label{criterion} Let $\alpha>-\frac12$. Suppose $T$ is an operator as above, that is, $T$ is of (I)-type or (II)-type.
\begin{enumerate}
    \item \label{Linf-BMO} Assume that $T$ is bounded on $L^2((0,\infty),\gamma_\alpha)$ and
\begin{equation}\label{C1}
\sup_{I\in \mathcal{B}_1}\sup_{x,z\in I}\int_{(2I)^c}\|K_t(x,y)-K_t(z,y)\|_X d\gamma_\alpha(y)<\infty,
\end{equation}
where $X=\mathbb{C}$ when the operator $T$ is of (I)-type. Then, $T$ is bounded from $L^\infty((0,\infty),\gamma_\alpha)$ into $\BMO((0,\infty),\gamma_\alpha)$.
\item \label{H1-L1} Assume that $T$ is bounded on $L^2((0,\infty),\gamma_\alpha)$ and it can be extended from $L^2((0,\infty),\gamma_\alpha)$ to $L^1((0,\infty),\gamma_\alpha)$ as a bounded operator from $L^1((0,\infty),\gamma_\alpha)$ into $L^{1,\infty}((0,\infty),\gamma_\alpha)$. Also, suppose that
\begin{equation}\label{C2}
\sup_{I\in \mathcal{B}_1}\sup_{y,z\in I}\int_{(2I)^c}\|K_t(x,y)-K_t(x,z)\|_Xd\gamma_\alpha(x)<\infty,
\end{equation}
where, as above, $X=\mathbb{C}$ when $T$ is of (I)-type. Then, $T$ can be extended from $H^1((0,\infty),\gamma_\alpha)\cap L^2((0,\infty),\gamma_\alpha)$ to $H^1((0,\infty),\gamma_\alpha)$ as a
bounded operator from $H^1((0,\infty),\gamma_\alpha)$ into $L^1((0,\infty),\gamma_\alpha)$.
\end{enumerate}
\end{thm}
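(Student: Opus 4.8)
The plan is to follow the classical Calderón--Zygmund scheme adapted to the local families $\mathcal{B}_a$, treating the two parts separately but with the same local/global splitting. Throughout I would fix $a=1$, which is harmless since both $H^1((0,\infty),\gamma_\alpha)$ and $\BMO((0,\infty),\gamma_\alpha)$ are independent of $a$, and I would handle the (I)- and (II)-type cases in parallel, writing $K_t$ and $\|\cdot\|_X$ with the convention $X=\mathbb{C}$ in the (I)-type case. The one structural point to keep in mind from the outset is that (II)-type operators are only almost linear: for them $T(f)=\|T_t(f)\|_X$ with $T_t$ linear, so every manipulation using linearity must be performed at the level of $T_t$ and transferred to $T$ through the reverse triangle inequality $\bigl|\,\|u\|_X-\|v\|_X\,\bigr|\le\|u-v\|_X$.

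For part \eqref{Linf-BMO}, I would first note that, since $\gamma_\alpha$ is a probability measure and $T$ is bounded on $L^2((0,\infty),\gamma_\alpha)$, any $f\in L^\infty\subset L^2$ has $Tf\in L^2\subset L^1$, so it only remains to control the seminorm $\|Tf\|_{*,\alpha}$. Fix $I=I(x_0,r)\in\mathcal{B}_1$ and split $f=f_1+f_2$ with $f_1=f\chi_{2I}$ and $f_2=f\chi_{(2I)^c}$, choosing the subtracted constant $c_I=T(f_2)(x_0)$, which is well defined because $x_0\notin\supp(f_2)$. Using $|Tf(y)-c_I|\le\|T_tf_1(y)\|_X+\|T_tf_2(y)-T_tf_2(x_0)\|_X$, the local term is controlled by Cauchy--Schwarz, the $L^2$-boundedness of $T$, the atom-type normalization, and the doubling of $\gamma_\alpha$ on $\mathcal{B}_2$:
\[
\frac{1}{\gamma_\alpha(I)}\int_I\|T_tf_1(y)\|_X\,d\gamma_\alpha(y)\le \frac{\|Tf_1\|_{L^2}}{\gamma_\alpha(I)^{1/2}}\le \frac{C\|f\|_\infty\,\gamma_\alpha(2I)^{1/2}}{\gamma_\alpha(I)^{1/2}}\le C\|f\|_\infty.
\]
For the global term, for $y\in I$ I would use the integral representation $T_tf_2(y)-T_tf_2(x_0)=\int_{(2I)^c}(K_t(y,w)-K_t(x_0,w))f_2(w)\,d\gamma_\alpha(w)$, take $\|\cdot\|_X$, bound $|f_2|\le\|f\|_\infty$, and invoke \eqref{C1} to obtain $\|T_tf_2(y)-T_tf_2(x_0)\|_X\le C\|f\|_\infty$ uniformly in $y\in I$. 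Taking the supremum over $I\in\mathcal{B}_1$ finishes this part.

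For part \eqref{H1-L1}, the core is a uniform estimate $\|Tb\|_{L^1}\le C$ over all $(1,q,\alpha)$-atoms $b$. The constant atom $b\equiv1$ is handled as above ($T1\in L^2\subset L^1$). For a normalized atom supported in $I_0=I(x_0,r_0)\in\mathcal{B}_1$, I would split $\int_0^\infty\|T_tb\|_X\,d\gamma_\alpha=\int_{2I_0}+\int_{(2I_0)^c}$. The local part is controlled by Cauchy--Schwarz, $L^2$-boundedness, the normalization $\|b\|_{L^q}\le\gamma_\alpha(I_0)^{1/q-1}$, and doubling on $\mathcal{B}_2$. On $(2I_0)^c$ the support condition gives $T_tb(x)=\int_{I_0}K_t(x,y)b(y)\,d\gamma_\alpha(y)$, and here I would use the moment condition $\int b\,d\gamma_\alpha=0$ to subtract $K_t(x,x_0)$, apply Minkowski's integral inequality and Fubini, and invoke \eqref{C2} to get $\int_{(2I_0)^c}\|T_tb(x)\|_X\,d\gamma_\alpha(x)\le C\int_{I_0}|b|\,d\gamma_\alpha\le C\|b\|_{L^q}\gamma_\alpha(I_0)^{1-1/q}\le C$.

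The main obstacle, and the step where the extra hypothesis really enters, is passing from the uniform bound on atoms to the bound on all of $H^1((0,\infty),\gamma_\alpha)$. Given $f\in H^1\cap L^2$ with $f=\sum_j\lambda_jb_j$, the partial sums $S_N=\sum_{j=0}^N\lambda_jb_j$ converge to $f$ in $L^1$ (since $H^1\hookrightarrow L^1$), so by the assumed boundedness from $L^1$ into $L^{1,\infty}$ a subsequence of $\{TS_N\}$ converges to $Tf$ a.e.; on the other hand, the uniform atom bound together with the triangle inequality for $T_t$ and $\|\cdot\|_X$ shows that $\sum_j\lambda_jTb_j$ converges in $L^1$. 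Identifying the two limits yields $\|Tf\|_{L^1}\le\sum_j|\lambda_j|\,\|Tb_j\|_{L^1}\le C\sum_j|\lambda_j|$, and taking the infimum over decompositions gives $\|Tf\|_{L^1}\le C\|f\|_{H^1}$. I expect this limiting and identification argument, rather than the kernel estimates, to be the delicate point: it is precisely why the $L^1\to L^{1,\infty}$ hypothesis is imposed, and the almost-linearity of (II)-type operators forces the convergence to be handled through $T_t$ and the norm $\|\cdot\|_X$.
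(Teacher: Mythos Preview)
Your proposal is correct and follows essentially the same route as the paper: the same local/global splitting on $f=f\chi_{2I}+f\chi_{(2I)^c}$ for part~\ref{Linf-BMO}, the same atom-by-atom estimate and the same use of the $L^1\to L^{1,\infty}$ hypothesis to pass to general $f\in H^1$ for part~\ref{H1-L1}. The only cosmetic differences are that the paper subtracts the average $\|(T_t f)_I\|_X$ rather than the value $T(f_2)(x_0)$ at the center, and that it works with $(1,2,\alpha)$-atoms and proves the $H^1$ bound directly for any $f\in H^1$ (via the pointwise inequality $T(f)\le\sum_j|\lambda_j|T(b_j)$ obtained from sublinearity) rather than first on $H^1\cap L^2$.
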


\begin{rem}If the kernel $K_t$ has certain regularity, conditions \eqref{C1} and \eqref{C2} can be deduced from
\begin{equation}\label{C1'}
    \sup_{I\in \mathcal{B}_1} r_I\sup_{x\in I}\int_{(2I)^c}\|\partial_x K_t(x,y)\|_X d\gamma_\alpha(y)<\infty,
\end{equation}
and
\begin{equation}\label{C2'}
    \sup_{I\in \mathcal{B}_1}r_I\sup_{y\in I}\int_{(2I)^c}\|\partial_y K_t(x,y)\|_X d\gamma_\alpha(x)<\infty,
\end{equation}
respectively, being $r_I$ the radius of the interval $I$.
\end{rem}

We now present the setting and the operators we shall be dealing with. Let $\alpha>-\frac12$. For every $k\in \mathbb{N}$, the Laguerre polynomial $L_k^\alpha$ of order $\alpha$ and degree $k$ is defined as in (\cite{Leb})
\[
L_k^\alpha(x)=\sqrt{\frac{\Gamma(\alpha+1)}{\Gamma(\alpha+k+1)k!}}e^xx^{-\alpha}\frac{d^k}{dx^k}\left(e^{-x}x^{\alpha+1}\right),\quad x\in (0,\infty).
\]
We consider the Laguerre operator $\widetilde{\Delta}_\alpha$ given by
\[
\widetilde{\Delta}_\alpha(f)(x)=-\frac{1}{4}\left(\frac{d^2}{dx^2}+\left(\frac{2\alpha+1}{x}-2x\right)\frac{d}{dx}\right)f(x),\quad\,f\in C^2(0,\infty).
\]
By defining, for every $k\in \mathbb{N}$, $\mathcal{L}_k^\alpha(x)=L_k^\alpha(x^2)$, $x\in (0,\infty)$, the sequence
$\{\mathcal{L}_k^\alpha\}_{k\in \mathbb{N}}$ is an orthonormal basis in $L^2((0,\infty),\gamma_\alpha)$. Furthermore, $\widetilde{\Delta}_\alpha\mathcal{L}_k^\alpha=k\mathcal{L}_k^\alpha$, for any
$k\in \mathbb{N}$.

For every $f\in L^1((0,\infty),\gamma_\alpha)$, we set
\[
c_k^\alpha(f)=\int_0^\infty f(x)\mathcal{L}_k^\alpha(x)d\gamma_\alpha(x),\quad\,k\in \mathbb{N}.
\]
We consider the operator $\Delta_\alpha$ given by
\[
\Delta_\alpha(f)=\sum_{k=0}^\infty kc_k^\alpha(f)\mathcal{L}_k^\alpha, \quad\,f\in D(\Delta_\alpha),
\]
where $D(\Delta_\alpha)=\{f\in L^2((0,\infty),\gamma_\alpha):\,\sum_{k=0}^\infty|kc_k^\alpha(f)|^2<\infty\}$. We have that $\Delta_\alpha(f)=\widetilde{\Delta}_\alpha(f)$ for every $f\in C_c^\infty(0,\infty)$, the space of smooth functions with compact support in $(0,\infty)$. The operator $\Delta_\alpha$ is a self-adjoint and positive operator. Moreover, the operator $-\Delta_\alpha$ generates a semigroup of operators $\{W_t^\alpha\}_{t>0}$ in $L^2((0,\infty),\gamma_\alpha)$ where, for every $t>0$,
\[
W_t^\alpha(f)=\sum_{k=0}^\infty e^{-kt}c_k^\alpha(f)\mathcal{L}_k^\alpha,\quad f\in L^2((0,\infty),\gamma_\alpha).
\]
By using the Hille-Hardy formula (\cite[(4.17.6)]{Leb})
we can write, for every $x,y,t>0$
\begin{equation}\label{Wtalfa-ker}
\sum_{k=0}^\infty e^{-kt}\mathcal{L}_k^\alpha(x)\mathcal{L}_k^\alpha(y)=\frac{\Gamma(\alpha+1)}{1-e^{-t}}\left(e^{-t/2}xy\right)^{-\alpha}I_\alpha\left(\frac{2e^{-t/2}xy}{1-e^{-t}}\right)e^{-\frac{e^{-t}(x^2+y^2)}{1-e^{-t}}},    
\end{equation}
where $I_\alpha$ denotes the modified Bessel function of the first kind and order $\alpha$.

We get, for every $f\in L^2((0,\infty),\gamma_\alpha)$ and $t>0$,
\begin{equation}\label{sem}
W_t^\alpha(f)(x)=\int_0^\infty W_t^\alpha(x,y)f(y)d\gamma_\alpha(y),\quad\,x\in (0,\infty),
\end{equation}
being $W_t^\alpha(x,y)$ the right-hand side of \eqref{Wtalfa-ker}, for $x,y,t\in (0,\infty)$.

The integral in (\ref{sem}) is absolutely convergent for every $f\in L^p((0,\infty),\gamma_\alpha)$, with $1\le p<\infty$. Furthermore, by defining, for every $t>0$, $W_t^\alpha$ on $L^p((0,\infty),\gamma_\alpha)$,
$1\le p<\infty$, by (\ref{sem}) $\{W_t^\alpha\}_{t>0}$ turns out to be a symmetric diffusion semigroup in the sense of Stein (\cite{StLP}).

We define the subordinated Poisson semigroup $\{P_t^\alpha\}_{t>0}$ associated with the Laguerre operator $\Delta_\alpha$ by
\begin{equation}
    \label{subor}
P_t^\alpha(f)=\frac{t}{2\sqrt{\pi}}\int_0^\infty e^{-t^2/(4u)}W_u^\alpha(f)\frac{du}{u^{3/2}},\quad\,t>0.
\end{equation}
Thus, $\{P_t^\alpha\}_{t>0}$ is a symmetric diffusion semigroup in the sense of Stein (\cite{StLP}) as well.

Given $k\in \mathbb{N}$, the maximal operator $P_{*,k}^\alpha$ is given by
\[
P_{*,k}^\alpha(f)(x)=\sup_{t>0}\left|t^{k}\partial^k_tP_t^\alpha(f)(x)\right|,\quad x\in (0,\infty).
\]
According to \cite[p.~73]{StLP} we have that $P_{*,0}^\alpha$ is bounded on $L^p((0,\infty),\gamma_\alpha)$, for every $1<p<\infty$. The maximal operator $W_*^\alpha$ associated with the heat semigroup $\{W_t^\alpha\}_{t>0}$ was studied by Muckenhoupt (\cite{Mu2}) (see also \cite{Di}). From Muckenhoupt's results it follows that $P_{*,0}^\alpha$ is bounded from $L^1((0,\infty),\gamma_\alpha)$ into $L^{1,\infty}((0,\infty),\gamma_\alpha)$. The Laguerre semigroup $\{W_t^\alpha\}_{t>0}$ can be extended to complex values of the parameter $t$. The corresponding maximal operator was studied in \cite{Sa1}. From \cite[Remark 4.3]{LeMX2} the operator $P_{*,k}^\alpha$ is bounded on $L^p((0,\infty),\gamma_\alpha)$, for every $1<p<\infty$ and $k\in \mathbb{N}$. By using \cite[Theorem~1.1]{BdL} we deduce that $P_{*,k}^\alpha$ is also bounded from $L^1((0,\infty),\gamma_\alpha)$ into $L^{1,\infty}((0,\infty),\gamma_\alpha)$, for every $k\in \mathbb{N}$.

The Littlewood-Paley functions associated with the semigroup $\{P_t^\alpha\}_{t>0}$ can be defined as follows. For every $n,k\in \mathbb{N}$ with $n+k\ge 1$, we consider the square function $g_{n,k}^\alpha$ given by
\[
g_{n,k}^\alpha(f)(x)=\left(\int_0^\infty \left|t^{n+k}\partial_x^n\partial_t^kP_t^\alpha(f)(x)\right|^2\frac{dt}{t}\right)^{1/2},\quad\,x\in (0,\infty).
\]
According to \cite[Chapter~4,~Section~6,~Corollary~1]{StLP}, for every $1<p<\infty$ and $k\in \mathbb{N}$, $k\ge 1$, $g_{0,k}^\alpha$ is bounded on $L^p((0,\infty),\gamma_\alpha)$. Moreover, in \cite[Theorem~1.2]{BdL} it was proved that $g_{0,k}^\alpha$ is bounded from $L^1((0,\infty),\gamma_\alpha)$ into $L^{1,\infty}((0,\infty),\gamma_\alpha)$, for every  $k\in \mathbb{N}$, $k\ge 1$. The authors proved in \cite[Theorem 1.1, (c)]{BDQS} that $g_{n,k}^\alpha$ is bounded on $L^p((0,\infty),\gamma_\alpha)$, for every $1<p<\infty$ and $n,k\in \mathbb{N}$ with $n+k\ge 1$. It is worth mentioning that Nowak (\cite[Theorems~6~and~7]{No}) established $L^p$-boundedness properties for Littlewood-Paley functions involving spatial derivatives ($n>0$) in other Laguerre contexts.

We now introduce the multipliers of Laplace transform type. A measurable function $M$ is said to be of Laplace transform type when
\[
M(x)=x\int_0^\infty e^{-xy}\phi(y)dy,\quad\,x\in (0,\infty),
\]
where $\phi\in L^\infty(0,\infty)$. Note that $M\in L^\infty(0,\infty)$ provided that $M$ is of Laplace transform type. We define the multiplier $T_M^\alpha$ by
\[
T_M^\alpha(f)=\sum_{k=1}^\infty M\left(\sqrt{k}\right)c_k^\alpha(f)\mathcal{L}_k^\alpha,\quad\,f\in L^2((0,\infty),\gamma_\alpha).
\]
Since $M\in L^\infty(0,\infty)$, $T_M^\alpha$ is bounded on $L^2((0,\infty),\gamma_\alpha)$. By \cite[Corollary~3,~p.~121]{StLP} $T_M^\alpha$ can be extended from $L^2((0,\infty),\gamma_\alpha)\cap L^p((0,\infty),\gamma_\alpha)$ to $L^p((0,\infty),\gamma_\alpha)$ as a bounded operator on $L^p((0,\infty),\gamma_\alpha)$, for every $1<p<\infty$. When $\beta>0$ and $\phi(y)=\frac{1}{\Gamma(1-2\beta i)}y^{-2\beta i}$, for $y\in (0,\infty)$, then $T_M^\alpha=\Delta_\alpha^{\beta i}$.

Let $\omega>0$. We define the negative power $\Delta_\alpha^{-\omega}$ of $\Delta_\alpha$ by
\[
\Delta_\alpha^{-\omega}(f)(x)=\sum_{k=1}^\infty k^{-\omega}c_k^\alpha(f)\mathcal{L}_k^\alpha,\quad\,f\in L^2((0,\infty),\gamma_\alpha).
\]
Thus, $\Delta_\alpha^{-\omega}$ is bounded on $L^2((0,\infty),\gamma_\alpha)$. Note that the function $M(x)=x^{-2\omega}$, for $x\in (0,\infty)$, is not of Laplace transform type, so $\Delta_\alpha^{-\omega}$ does not fall in the scope of multipliers of Laplace transform type. The operator $\Delta^{-\omega}_\alpha$ can be extended from $L^2((0,\infty),\gamma_\alpha)\cap L^p((0,\infty),\gamma_\alpha)$ to $L^p((0,\infty),\gamma_\alpha)$ as a bounded operator on $L^p((0,\infty),\gamma_\alpha)$, for every $1<p<\infty$ (see \cite[Lemma~2.2]{MN} for the Ornstein-Uhlenbeck operator case). However, $\Delta_\alpha^{-\omega}$ is not bounded from $L^1((0,\infty),\gamma_\alpha)$ into $L^{1,\infty}((0,\infty),\gamma_\alpha)$ (see \cite[Proposition~6.2]{GCMST} for the Ornstein-Uhlenbeck operator case).

Let $n\in \mathbb{N}$, $n\ge 1$. We define the $n$-th order Riesz transform $R_\alpha^n$ associated with $\Delta_\alpha$ as follows. For every $f\in L^2((0,\infty),\gamma_\alpha)$,
\[
R_\alpha^n(f)=\sum_{k=1}^\infty \frac{1}{k^{n/2}}c_k^\alpha(f)\frac{d^n}{dx^n}\mathcal{L}_k^\alpha.
\]
By \cite{NoS}, $R_\alpha^n$ is bounded on $L^2((0,\infty),\gamma_\alpha)$. For every  $f\in L^2((0,\infty),\gamma_\alpha)$ we have that
\[
R_\alpha^n(f)(x)=\int_0^\infty K_\alpha^n(x,y)f(y)d\gamma_\alpha(y),\quad \text{a.e. } x\in (0,\infty)\setminus \supp(f),
\]
where
\[
K_\alpha^n(x,y)=\frac{1}{\Gamma(n/2)}\int_0^\infty t^{-1+n/2}W_t^\alpha(x,y)dt,\quad x,y\in (0,\infty), x\neq y.
\]
By \cite[Theorem~1.1]{BFRS}, \cite[Theorem~1.1]{Sa4}, and \cite[Theorem~13]{No} we deduce that each $R_\alpha^n$ can be extended
from $L^2((0,\infty),\gamma_\alpha)\cap L^p((0,\infty),\gamma_\alpha)$ to $L^p((0,\infty),\gamma_\alpha)$ as a
bounded operator on $L^p((0,\infty),\gamma_\alpha)$, for every $1<p<\infty$.
Furthermore, $R_\alpha^1$ can be extended from $L^2((0,\infty),\gamma_\alpha)\cap L^1((0,\infty),\gamma_\alpha)$ to $L^1((0,\infty),\gamma_\alpha)$
as a bounded operator from $L^1((0,\infty),\gamma_\alpha)$ into $L^{1,\infty}((0,\infty),\gamma_\alpha)$ (see \cite{FSS2}).

Finally, we present variation operators. Let $\rho>2$. If $F$ is a complex valued and measurable function defined on $(0,\infty)$, we define the $\rho$-variation $V_\rho(\{F(t)\}_{t>0})$ as follows
\[
V_\rho(\{F(t)\}_{t>0})= \sup_{0<t_N<t_{N-1}<...<t_1,\; N\in \mathbb{N},\; N\ge 1}\left(\sum_{j=1}^{N-1}|F(t_j)-F(t_{j+1})|^\rho\right)^{1/\rho}.
\]
According to \cite[Corollary~6.1]{LeMX1} (see also \cite[Theorem~3.3]{JR}), the $\rho$-variation operator $V_\rho(\{t^k\partial_t^k P_t^\alpha\}_{t>0})$, $k\in \mathbb{N}$, is bounded on $L^p((0,\infty),\gamma_\alpha)$, ${1<p<\infty}$. Later, in \cite[Theorem~1.1]{BdL} it was proved that $V_\rho(\{t^k\partial_t^kP_t^\alpha\}_{t>0})$ is bounded from $L^1((0,\infty),\gamma_\alpha)$ into $L^{1,\infty}((0,\infty),\gamma_\alpha)$.

Other results for the above harmonic analysis operators in the $\Delta_\alpha$-setting can be found in \cite{AE,BDQS,FSS1,GIT,GLLNU,Sa2,Sa3}.

In the following we establish the endpoints inequalities for the operators we have just introduced.

\begin{thm}\label{endpoints} Let $\alpha>-\frac12$ and $\rho>2$.
\begin{enumerate}
    \item \label{Linf-BMO-op} The operators $P_{*,k}^\alpha$ for $k\in \mathbb N$, $g_{n,k}^\alpha$ for $n,k\in \mathbb{N}$ with $n+k\ge 1$, $R_\alpha^n$ for $n\in \mathbb{N}$, $n\ge 1$, $V_\rho(\{t^k\partial_t^k P_t^\alpha\}_{t>0})$ for $k\in \mathbb{N}$, $\Delta_\alpha^{-\omega}$ for $\omega>0$, and $T_M^\alpha$ where $M$ is of Laplace transform type, are all bounded from $L^\infty((0,\infty),\gamma_\alpha)$ into $\BMO((0,\infty),\gamma_\alpha)$.
    
    \item \label{H1-L1-op} The operators $P_{*,k}^\alpha$ for $k\in \mathbb N$, $g_{0,k}^\alpha$ for $k\in \mathbb{N}$, $k\ge 1$, $R_\alpha^1$, $V_\rho(\{t^k\partial_t^kP_t^\alpha\}_{t>0})$ for $k\in \mathbb{N}$, $\Delta_\alpha^{-\omega}$ for $\omega>0$, and $T_M^\alpha$ where $M$ is of Laplace transform type, can be extended from $L^2((0,\infty),\gamma_\alpha)\cap H^1((0,\infty),\gamma_\alpha)$ to $H^1((0,\infty),\gamma_\alpha)$ as bounded operators from $H^1((0,\infty),\gamma_\alpha)$ to $L^1((0,\infty),\gamma_\alpha)$.
\end{enumerate}
\end{thm}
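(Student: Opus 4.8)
The plan is to deduce Theorem~\ref{endpoints} from the boundedness criterion in Theorem~\ref{criterion}. For every operator in the list the $L^2((0,\infty),\gamma_\alpha)$-boundedness has already been recorded in the introduction, so the substance of the proof lies in verifying the smoothness conditions: \eqref{C1} for the $L^\infty\to\BMO$ statement and \eqref{C2} for the $H^1\to L^1$ statement. Since every kernel below is smooth off the diagonal, I would in practice verify the stronger gradient conditions \eqref{C1'} and \eqref{C2'} and then invoke the mean value theorem. The $H^1\to L^1$ statement further splits the operators into two groups. The sublinear (II)-type operators $P_{*,k}^\alpha$, $g_{0,k}^\alpha$ ($k\ge1$) and $V_\rho(\{t^k\partial_t^kP_t^\alpha\}_{t>0})$, together with the linear operator $R_\alpha^1$, all carry the required $L^1\to L^{1,\infty}$ bound by the cited references, so one applies the $H^1\to L^1$ half of Theorem~\ref{criterion} directly after checking \eqref{C2}. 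By contrast $\Delta_\alpha^{-\omega}$ is \emph{not} of weak type $(1,1)$, so for it---and, so as to avoid any weak-type hypothesis, also for $T_M^\alpha$---I would argue by duality: both are linear and each has an adjoint in the same class ($\Delta_\alpha^{-\omega}$ is self-adjoint, while $(T_M^\alpha)^\ast=T_{\overline M}^\alpha$ is again of Laplace transform type), so once the $L^\infty\to\BMO$ statement is known for the operator and its adjoint, the $H^1$-$\BMO$ duality of Theorem~\ref{ThBMO} delivers the $H^1\to L^1$ bound.

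The next step is to record, for each operator, the kernel $K_t(x,y)$ and the Banach space $X$ of Theorem~\ref{criterion}. The operators $R_\alpha^n$, $\Delta_\alpha^{-\omega}$ and $T_M^\alpha$ are of (I)-type with $X=\mathbb{C}$: for $R_\alpha^n$ this is the scalar kernel $K_\alpha^n(x,y)$ displayed in the introduction, while $\Delta_\alpha^{-\omega}$ and $T_M^\alpha$ are represented as $\frac1{\Gamma(\omega)}\int_0^\infty t^{\omega-1}\big(W_t^\alpha(x,y)-1\big)\,dt$ and $-\int_0^\infty\phi(t)\,\partial_tP_t^\alpha(x,y)\,dt$, the former through the heat kernel and the latter through the Poisson kernel. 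The maximal, square and variation operators are of (II)-type, with $X=L^\infty((0,\infty),dt)$, $X=L^2((0,\infty),dt/t)$ and the $\rho$-variation space respectively, and kernels $t^k\partial_t^kP_t^\alpha(x,y)$, $t^{n+k}\partial_x^n\partial_t^kP_t^\alpha(x,y)$ and $t^k\partial_t^kP_t^\alpha(x,y)$. For the variation operator I would first bound the $X$-norm by the total variation, $\|K_t(x,y)\|_X\le\int_0^\infty\big|\partial_t\big(t^k\partial_t^kP_t^\alpha(x,y)\big)\big|\,dt$, reducing it to a scalar estimate on one further $t$-derivative of the Poisson kernel. A convenient simplification is that $W_t^\alpha(x,y)$, and hence $P_t^\alpha(x,y)$, is symmetric in $x$ and $y$; thus for the operators carrying no spatial derivative the condition \eqref{C2} follows from \eqref{C1} by symmetry, and only the $g_{n,k}^\alpha$ with $n\ge1$ (which occur solely in the $L^\infty\to\BMO$ statement) demand a genuinely one-sided estimate in $x$.

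Everything therefore reduces to a single family of pointwise bounds on the Laguerre heat and Poisson kernels and their $x$-, $y$- and $t$-derivatives. Starting from the Hille-Hardy expression \eqref{Wtalfa-ker} for $W_t^\alpha(x,y)$ and the subordination formula \eqref{subor}, together with the asymptotics of the modified Bessel function, $I_\alpha(z)\sim c\,z^\alpha$ as $z\to0^+$ and $I_\alpha(z)\sim c\,e^{z}z^{-1/2}$ as $z\to\infty$, I would prove majorants of the form $\big|t^{a}\partial_x^b\partial_t^cP_t^\alpha(x,y)\big|\le\Phi_{a,b,c}(t,x,y)$ with $\Phi$ of explicit Gaussian type. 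Taking the supremum over $t$, or integrating against $dt/t$, then converts these into the corresponding bounds for $\|\partial_xK_t(x,y)\|_X$ and $\|\partial_yK_t(x,y)\|_X$.

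The main obstacle is to carry out the far-region integrations and show that $\int_{(2I)^c}\|\partial_xK_t(x,y)\|_X\,d\gamma_\alpha(y)$ and $\int_{(2I)^c}\|\partial_yK_t(x,y)\|_X\,d\gamma_\alpha(x)$ are $O(1/r_I)$ uniformly over $I\in\mathcal{B}_1$, so that the factor $r_I\le m(x_I)=\min\{1,1/x_I\}$ in \eqref{C1'} and \eqref{C2'} is absorbed. This forces a splitting of $(2I)^c$ into a \emph{local} part, where $2r_I\le|x-y|\lesssim m(x)$: there the singular-integral and square-function kernels carry the $|x-y|^{-2}$ Calder\'on--Zygmund singularity of a one-dimensional gradient, and the doubling of $\gamma_\alpha$ on $\mathcal{B}_a$ gives $\int_{2r_I\le|x-y|\lesssim m(x)}|x-y|^{-2}\,d\gamma_\alpha(y)\lesssim r_I^{-1}$, exactly the factor needed; and a \emph{global} part, where the Gaussian factors $e^{-c(x^2+y^2)}$ supply convergence at infinity. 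The most delicate case is $\Delta_\alpha^{-\omega}$: lacking any spatial or temporal derivative to annihilate the constant eigenfunction, its kernel requires the subtraction $W_t^\alpha(x,y)-1$ to be exploited explicitly in the large-$t$ regime so that the weight $t^{\omega-1}$ stays integrable, and it is exactly this feature that underlies both the failure of its weak-type $(1,1)$ bound and the recourse to duality noted above.
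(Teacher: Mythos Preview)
Your overall strategy coincides with the paper's: apply Theorem~\ref{criterion}, verify the gradient conditions \eqref{C1'}--\eqref{C2'}, and for $\Delta_\alpha^{-\omega}$ and $T_M^\alpha$ pass to the $H^1\to L^1$ bound by duality (your observation that $(T_M^\alpha)^*=T_{\overline M}^\alpha$ is in fact sharper than the paper's claim of self-adjointness). The difference is in how the kernel estimates are executed. Instead of Bessel asymptotics and a spatial local/global split, the paper uses the integral representation $I_\alpha(z)=c_\alpha\int_{-1}^1 e^{zs}(1-s^2)^{\alpha-1/2}\,ds$, which writes $W^\alpha_{-2\log r}(x,y)$ as a single integral over $s\in(-1,1)$ with weight $\Pi_\alpha(s)$ and exponent $-q(rx,y,s)/(1-r^2)$, $q(x,y,s)=x^2+y^2-2xys$; spatial derivatives then produce Hermite polynomials in $(rx-ys)/\sqrt{1-r^2}$. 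The far-region integral is split not in $x$ but in the time variable, at $r=1-\tfrac{r_I}{2(1+y)}$: for $r$ near $1$ one has $q(x,ry,s)\gtrsim|x-y|^2$ on $(2I)^c$, and the Nowak--Szarek lemmas for the measure $d\mathfrak m_\alpha=x^{2\alpha+1}dx$ give the Calder\'on--Zygmund $|x-y|^{-2}$ decay; for $r$ away from $1$ the $x$-integral against $d\mathfrak m_\alpha$ is recognized as a \emph{Hankel translation} of a radial Gaussian, whose $L^1(\mathfrak m_\alpha)$-contractivity yields the bound $\lesssim((1+y)/r_I)^{1/2}$. This bypasses any direct appeal to a factor $e^{-c(x^2+y^2)}$, which in the present setting is only available after weighting by $r^2$ or $(1-s)$ (see \ref{eq-D3}--\ref{eq-D4}) and would make your global estimate more delicate than you suggest. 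One small correction: for the $\Delta_\alpha^{-\omega}$ gradient estimate the subtraction of $1$ plays no role, since $\partial_x$ annihilates the constant and the remaining integrand, of order $(-\log r)^{\omega-1}$ near $r=0$, is $dr$-integrable on $(0,1)$ for every $\omega>0$.
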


We remark that endpoint inequalities for Riesz transforms and some spectral multipliers in the Ornstein-Uhlenbeck setting were proved in \cite{MM} and \cite{MMS-end}.

This paper is organized as follows. In Section~\ref{dem-criterion}, we prove Theorem~\ref{criterion}. In Section~\ref{lemmas-aux} we state several lemmas that will be useful when proving our main theorem in Section~\ref{mainthm}.

\section{Proof of Theorem~\ref{criterion}} \label{dem-criterion}

In order to prove~\ref{Linf-BMO} we can proceed as in the proof of \cite[Theorem~6.1,~(i)]{MM}. We sketch it here. Suppose that $T$ is a (II)-type operator associated with the Banach space $X$. When $T$ is an (I)-type operator the proof is similar. 

Let $f\in L^\infty((0,\infty),\gamma_\alpha)$ and $I\in \mathcal{B}_1$. We define $f_1=f\chi_{2I}$ and $f_2=f-f_1$. We have that $T_t(f)=T_t(f_1)+T_t(f_2)$, $t>0$. Thus, we can write
\begin{align*}
\int_I&\left|T(f)(x)-\|(T_t(f))_I\|_X \right| d\gamma_\alpha(x)\\
&\le \int_I\|T_t(f)(x)-(T_t(f))_I\|_X d\gamma_\alpha(x)\\
&\le\int_I\|T_t(f_1)(x)-(T_t(f_1))_I\|_Xd\gamma_\alpha(x)+\int_I\|T_t(f_2)(x)-(T_t(f_2))_I\|_Xd\gamma_\alpha(x)\\
&\le 2\int_I\|T_t(f_1)(x)\|_Xd\gamma_\alpha(x)+\int_I\|T_t(f_2)(x)-(T_t(f_2))_I\|_Xd\gamma_\alpha(x).
\end{align*}

Since $T$ is bounded on $L^2((0,\infty),\gamma_\alpha)$ and $\gamma_\alpha$ is doubling on $\mathcal B_1$, we get
\begin{align*}
\frac{1}{\gamma_\alpha(I)}\int_I\|T_t(f_1)(x)\|_Xd\gamma_\alpha(x)&\le \left(\frac{1}{\gamma_\alpha(I)}\int_I|T(f_1)(x)|^2d\gamma_\alpha(x)\right)^{1/2}\\
&\le C\left(\frac{1}{\gamma_\alpha(I)}\int_{2I} |f(x)|^2d\gamma_\alpha(x)\right)^{1/2}\\
&\le C\|f\|_{L^\infty((0,\infty),\gamma_\alpha)}.
\end{align*}
According to (\ref{C1}) and the properties of the Bochner integration we obtain for the other term
\begin{align*}
\frac{1}{\gamma_\alpha(I)}&\int_I\|T_t(f_2)(x)-(T_t(f_2))_I\|_Xd\gamma_\alpha(x)
\\&=\frac{1}{(\gamma_\alpha(I))^2}\int_I\left\|\int_I(T_t(f_2)(x)-T_t(f_2)(z))d\gamma_\alpha(z)\right\|_Xd\gamma_\alpha(x)\\
&\le \frac{1}{(\gamma_\alpha(I))^2}\int_I\int_I\left\|\int_{(2I)^c}(K_t(x,y)-K_t(z,y))f(y)d\gamma_\alpha(y)\right\|_Xd\gamma_\alpha(z)d\gamma_\alpha(x)\\
&\le \frac{1}{(\gamma_\alpha(I))^2}\int_I\int_I\int_{(2I)^c}\|K_t(x,y)-K_t(z,y)\|_X|f(y)|d\gamma_\alpha(y)d\gamma_\alpha(z)d\gamma_\alpha(x)\\
&\le \|f\|_{L^\infty((0,\infty),\gamma_\alpha)}\sup_{x,z\in I}\int_{(2I)^c}\|K_t(x,y)-K_t(z,y)\|_Xd\gamma_\alpha(y)\\
&\le C\|f\|_{L^\infty((0,\infty),\gamma_\alpha)}.
\end{align*}
We conclude that $T(f)\in \BMO((0,\infty),\gamma_\alpha)$ and \[\|T(f)\|_{\BMO((0,\infty),\gamma_\alpha)}\le C\|f\|_{L^\infty((0,\infty),\gamma_\alpha)}.\]

Now, we are going to prove~\ref{H1-L1}. In the proof of \cite[Theorem~6.1,~(ii)]{MM} Mauceri and Meda used the duality between Hardy and \BMO\ spaces. Here we cannot use duality arguments when we consider (II)-type operators. Assume that $T$ is a (II)-type operator associated with a Banach space $X$. For (I)-type operators the same proof works.

Suppose that $b$ is a $(1,2,\alpha)$--atom. If $b(x)=1$ for every $x\in (0,\infty)$, we have that
\[
\|T(b)\|_{L^1((0,\infty),\gamma_\alpha)}\le \|T(b)\|_{L^2((0,\infty),\gamma_\alpha)}\le C\|b\|_{L^2((0,\infty),\gamma_\alpha)}\le C.
\]
Assume now that $0<r_I\le x_I$ are such that $I=(x_I-r_I,x_I+r_I)\in \mathcal{B}_1$ and the following properties hold
\begin{enumerate}[label=(\roman*)]
    \item \label{sop-b} $\supp(b)\subset I$;
    \item \label{norm-b} $\|b\|_{L^2((0,\infty),\gamma_\alpha)}\le \gamma_\alpha(I)^{-\frac12}$;
    \item \label{int-b} $\int_0^\infty b(x)d\gamma_\alpha(x)=0$.
\end{enumerate}

We can write
\begin{align*}
\int_0^\infty|T(b)(x)|d\gamma_\alpha(x)&=\int_{(2I)\cap (0,\infty)}|T(b)(x)|d\gamma_\alpha(x)+\int_{(0,\infty)\setminus (2I)}|T(b)(x)|d\gamma_\alpha(x)\\
&=A_1+A_2.
\end{align*}
Since $T$ is bounded on $L^2((0,\infty),\gamma_\alpha)$, by using~\ref{norm-b} we get
\[A_1\le \|T(b)\|_{L^2((0,\infty),\gamma_\alpha)}(\gamma_\alpha(2I))^{1/2}\le C\|b\|_{L^2((0,\infty),\gamma_\alpha)}(\gamma_\alpha(I))^{1/2}\le C.\]
According to~\ref{sop-b} and~\ref{int-b}, the properties of (II)-type operators lead to
\begin{align*}
A_2&=\int_{(2I)^c}\left\|\int_Ib(y)(K_t(x,y)-K_t(x,x_I))d\gamma_\alpha(y)\right\|_Xd\gamma_\alpha(x)\\
&\le \int_I|b(y)|\int_{(2I)^c}\|K_t(x,y)-K_t(x,x_I)\|_Xd\gamma_\alpha(x)d\gamma_\alpha(y)\\
&\le C\int_I|b(y)|d\gamma_\alpha(y)\le C\|b\|_{L^2((0,\infty, \gamma_\alpha))}(\gamma_\alpha(I))^{1/2}\le C.
\end{align*}
We conclude that
\[
\|T(b)\|_{L^1((0,\infty),\gamma_\alpha)}\le C,
\]
where $C>0$ does not depend on $b$.

By proceeding as in the end of the proof of \cite[Theorem~1.2]{BDQS-2} we can finish this one. Indeed, suppose that $f=\sum_{j=0}^\infty\lambda_jb_j$, where, for every $j\in \mathbb{N}$, $b_j$ is a $(1,2,\alpha)$--atom and $\lambda_j\in \mathbb{C}$ being $\sum_{j=0}^\infty|\lambda_j|<\infty$. The series defining $f$ converges in $L^1((0,\infty),\gamma_\alpha)$. Since $T$ is bounded from $L^1((0,\infty),\gamma_\alpha)$ into $L^{1,\infty}((0,\infty),\gamma_\alpha)$ we have that
\[
T(f)=\lim_{k\to\infty}T\left(\sum_{j=0}^k\lambda_jb_j\right), \text{ in } L^{1,\infty}((0,\infty),\gamma_\alpha).
\]
Then, there exists an increasing function $\psi:\mathbb{N}\rightarrow \mathbb{N}$ such that
\[
T(f)(x)=\lim_{k\to\infty}T\left(\sum_{j=0}^{\psi(k)}\lambda_jb_j\right)(x),\quad \text{a.e. } x\in (0,\infty),
\]
and thus, for almost every $x\in (0,\infty),$
\[
|T(f)(x)|=\lim_{k\to\infty}T\left(\sum_{j=0}^{\psi(k)}\lambda_jb_j\right)(x)\le\lim_{k\to\infty}\sum_{j=0}^{\psi(k)}|\lambda_j|T(b_j)(x)=\sum_{j=0}^{\infty}|\lambda_j|T(b_j)(x).
\]
It follows that
\[
\|T(f)\|_{L^1((0,\infty),\gamma_\alpha)}\le \sum_{j=0}^\infty|\lambda_j|\|T(b_j)\|_{L^1((0,\infty),\gamma_\alpha)}\le C\sum_{j=0}^\infty |\lambda_j|.
\]
We conclude that $\|T(f)\|_{L^1((0,\infty),\gamma_\alpha)}\le C\|f\|_{H^1((0,\infty),\gamma_\alpha)}$ and the proof is now finished.

\begin{rem} Some comments about the proof of Theorem~\ref{criterion}~\ref{H1-L1} are in order. Under the conditions given in Theorem~\ref{criterion}~\ref{H1-L1} we prove that there exists $C>0$ such that $\|T(b)\|_{L^1((0,\infty),\gamma_\alpha)}\le C$, for every $(1,2,\alpha)$--atom $b$. We need the operator $T$ to be bounded from $L^1((0,\infty),\gamma_\alpha)$ into $L^{1,\infty}((0,\infty),\gamma_\alpha)$ because the results in \cite{MSV} and \cite{YZ} cannot be applied. Note that our underlying space is not of homogeneous type.
\end{rem}

\section{Some technical lemmas}\label{lemmas-aux}

In this section we list and prove auxiliary results which are the key ingredients for the proof of Theorem~\ref{endpoints}.

We begin by establishing some estimates involving the integral kernel of the heat semigroup $\{W_t^\alpha\}_{t>0}$. The integral representation of the modified Bessel function of the first kind $I_\nu$, $\nu>-\frac12$ (see \cite[(5.10.22)]{Leb}), leads to
\[W_t^\alpha(x,y)=\frac{1}{(1-e^{-t})^{\alpha+1}}\int_{-1}^1 \exp\left(-\frac{q\left(e^{-t/2}x,y,s\right)}{1-e^{-t}}+y^2\right)\Pi_\alpha(s)ds,\]
for $x,y,t\in (0,\infty)$, where $\Pi_\alpha(s):=\frac{\Gamma(\alpha+1)}{\Gamma(\alpha+1/2)\sqrt{\pi}}(1-s^2)^{\alpha-1/2}$ for $s\in (-1,1)$ and $q(x,y,s):=x^2+y^2-2xys$ for $x,y\in (0,\infty)$ and $s\in (-1,1)$.

For every $n\in \mathbb N$, we denote by $H_n$ the Hermite polynomial of degree $n$ given by 
\begin{equation}\label{Hpoly}
    H_n(x)=(-1)^n e^{x^2}\frac{d^n}{dx^n}\left(e^{-x^2}\right),\quad  x\in \mathbb R.
\end{equation}
Notice that for any $n\in \mathbb N$, $x,y,t\in (0,\infty)$ and $s\in (-1,1)$ we have that
\begin{align*}
    \frac{d^n}{dx^n}\exp\left(-\frac{q\left(e^{-t/2}x,y,s\right)}{1-e^{-t}}\right)&=\frac{d^n}{dx^n}\exp\left(-\left(\frac{e^{-t/2}x-ys}{\sqrt{1-e^{-t}}}\right)^2\right)e^{-\frac{1-s^2}{1-e^{-t}}y^2}\\
    &=\left(\frac{e^{-t/2}}{\sqrt{1-e^{-t}}}\right)^n \left.\frac{d^n}{dz^n} e^{-z^2}\right|_{z=\frac{e^{-t/2}x-ys}{\sqrt{1-e^{-t}}}}e^{-\frac{1-s^2}{1-e^{-t}}y^2}\\
    &=\left(-\frac{e^{-t/2}}{\sqrt{1-e^{-t}}}\right)^n H_n\left(\frac{e^{-t/2}x-ys}{\sqrt{1-e^{-t}}}\right)e^{-\frac{q\left(e^{-t/2}x,y,s\right)}{1-e^{-t}}}.
\end{align*}
Then, given $x,y,t\in (0,\infty)$,
\begin{align*}
  \partial_x^n W_t^\alpha (x,y)&=\frac{1}{(1-e^{-t})^{\alpha+1}}\left(-\frac{e^{-t/2}}{\sqrt{1-e^{-t}}}\right)^n \\
  &\quad \times \int_{-1}^1 H_n\left(\frac{e^{-t/2}x-ys}{\sqrt{1-e^{-t}}}\right)e^{-\frac{q\left(e^{-t/2}x,y,s\right)}{1-e^{-t}}+y^2} \Pi_\alpha(s)ds,  
\end{align*}
and we can also see that
\begin{align*}
  \partial_y\partial_x W_t^\alpha (x,y)&=\frac{2e^{-t/2}}{(1-e^{-t})^{\alpha+3/2}} \int_{-1}^1 e^{-\frac{q\left(e^{-t/2}x,y,s\right)}{1-e^{-t}}+y^2} \\
  &\quad \times   \left(\frac{s}{\sqrt{1-e^{-t}}}+\frac{2e^{-t/2}\left(e^{-t/2}y-xs\right)\left(e^{-t/2}x-ys\right)}{(1-e^{-t})^{3/2}}\right)\Pi_\alpha(s)ds.
\end{align*}

From all of the above and considering, as usual, the change of variables $r=e^{-t/2}$, $t\in (0,\infty)$, we deduce the following result.

\begin{lem}
\label{lem3.7}
    Let $\alpha>-\frac12$ and $n\in \mathbb N$. There exists $C>0$ such that for any $x,y\in (0,\infty)$ and $r\in (0,1)$, 
    \begin{align*}
    \left|\partial_x^n W_{-2\log r}^\alpha (x,y)\right|&\leq C r^n\int_{-1}^1 \left(1+\left|\frac{rx-ys}{\sqrt{1-r^2}}\right|^n\right)\frac{e^{-\frac{q(rx,y,s)}{1-r^2}+y^2}}{(1-r^2)^{\alpha+1+\frac n2}} \Pi_\alpha(s)ds,
    \end{align*}
    and
    \begin{align*}
    \left|\partial_y\partial_x W_{-2\log r}^\alpha (x,y)\right|&\leq C r\int_{-1}^1 \left(\frac{|s|}{\sqrt{1-r^2}}+\frac{2r|ry-xs||rx-ys|}{(1-r^2)^{3/2}}\right)\\
    &\quad \times\frac{e^{-\frac{q(rx,y,s)}{1-r^2}+y^2}}{(1-r^2)^{\alpha+\frac 32}} \Pi_\alpha(s)ds.
    \end{align*}
\end{lem}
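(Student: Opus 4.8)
The plan is to derive both inequalities directly from the two explicit integral representations for $\partial_x^n W_t^\alpha(x,y)$ and $\partial_y\partial_x W_t^\alpha(x,y)$ displayed immediately before the statement. The entire lemma is obtained by performing the substitution $r=e^{-t/2}$ and then estimating the resulting integrands by elementary inequalities. Since $t\mapsto e^{-t/2}$ is a bijection from $(0,\infty)$ onto $(0,1)$ with inverse $t=-2\log r$, under it we have $e^{-t/2}=r$ and $1-e^{-t}=1-r^2$, so every factor appearing in the two formulas rewrites in terms of $r\in(0,1)$.

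For the first estimate I would substitute to obtain
\begin{align*}
\partial_x^n W_{-2\log r}^\alpha(x,y)=\frac{(-1)^n r^n}{(1-r^2)^{\alpha+1+\frac n2}}\int_{-1}^1 H_n\!\left(\frac{rx-ys}{\sqrt{1-r^2}}\right)e^{-\frac{q(rx,y,s)}{1-r^2}+y^2}\Pi_\alpha(s)\,ds,
\end{align*}
where the exponent $\alpha+1+\frac n2$ arises from combining $(1-r^2)^{-(\alpha+1)}$ with the factor $(1-r^2)^{-n/2}$ coming from the prefactor $\bigl(-r/\sqrt{1-r^2}\bigr)^n$. Taking absolute values inside the integral, the only ingredient needed is the elementary bound $|H_n(u)|\le C(1+|u|^n)$, valid for all $u\in\mathbb R$ because $H_n$ is a polynomial of degree $n$ and $|u|^k\le 1+|u|^n$ for $0\le k\le n$. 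Applying this with $u=(rx-ys)/\sqrt{1-r^2}$ yields precisely the claimed bound.

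The mixed-derivative estimate is even more immediate. After the same substitution the formula reads
\begin{align*}
\partial_y\partial_x W_{-2\log r}^\alpha(x,y)=\frac{2r}{(1-r^2)^{\alpha+\frac32}}\int_{-1}^1 e^{-\frac{q(rx,y,s)}{1-r^2}+y^2}\left(\frac{s}{\sqrt{1-r^2}}+\frac{2r(ry-xs)(rx-ys)}{(1-r^2)^{3/2}}\right)\Pi_\alpha(s)\,ds,
\end{align*}
and the inequality follows simply by moving the absolute value inside the integral---legitimate since $e^{-q(rx,y,s)/(1-r^2)+y^2}\Pi_\alpha(s)\ge 0$---using $|(ry-xs)(rx-ys)|=|ry-xs|\,|rx-ys|$ and taking $C=2$.

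I do not expect any genuine obstacle: the lemma is essentially bookkeeping resting on the two identities preceding it. The only points deserving care are verifying that the powers of $1-r^2$ assemble correctly into $\alpha+1+\frac n2$ and $\alpha+\frac32$, and invoking the polynomial growth bound for $H_n$; both are routine.
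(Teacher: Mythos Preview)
Your proposal is correct and follows exactly the paper's approach: the lemma is stated immediately after the explicit integral formulas for $\partial_x^n W_t^\alpha$ and $\partial_y\partial_x W_t^\alpha$, and the paper simply says it is deduced ``from all of the above and considering, as usual, the change of variables $r=e^{-t/2}$''. You have merely written out the routine details (the substitution and the polynomial bound $|H_n(u)|\le C(1+|u|^n)$) that the paper leaves implicit.
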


We now state two lemmas that will be useful for several estimates we will give below. The first lemma was given in \cite[Lemma~4]{BCCFR} when $\sigma\in \mathbb N$, $\sigma\geq 1$. We include here the case~${\sigma=0}$ since it also holds, as the reader can check. The second one deals with the derivatives of the Poisson kernel, involved in the definition of many of the operators considered in this article.

\begin{lem} \label{Lema4BCCFR} Let $\sigma\in \mathbb{N}$. Then
\[\left|\partial_t^\sigma \left(t e^{-\frac{t^2}{4u}}\right)\right|\leq C e^{-\frac{t^2}{8u}} u^{\frac{1-\sigma}{2}}, \quad t,u\in (0,\infty).\]
\end{lem}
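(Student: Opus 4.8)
The plan is to prove the estimate
\[
\left|\partial_t^\sigma \left(t e^{-\frac{t^2}{4u}}\right)\right|\leq C e^{-\frac{t^2}{8u}} u^{\frac{1-\sigma}{2}}, \quad t,u\in (0,\infty),
\]
by first making the natural substitution $v=\frac{t}{\sqrt{u}}$, which reduces the two-variable estimate to a one-variable statement about a fixed Schwartz-type function. Concretely, write $f(t)=te^{-t^2/(4u)}$ and observe by the chain rule that each $t$-derivative produces one factor of $u^{-1/2}$ after rescaling; so I would show that $\partial_t^\sigma f(t)$ equals $u^{-\sigma/2}$ times a sum of terms of the form $(\text{polynomial in } t/\sqrt{u})\, e^{-t^2/(4u)}$, with the polynomials having bounded degree depending only on $\sigma$.

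The cleanest way to organize this is to prove by induction on $\sigma$ that there exists a polynomial $P_\sigma$ (with coefficients depending only on $\sigma$) such that
\[
\partial_t^\sigma \left(t e^{-\frac{t^2}{4u}}\right)=u^{-\sigma/2}\,P_\sigma\!\left(\tfrac{t}{\sqrt u}\right)\,\sqrt{u}\;e^{-\frac{t^2}{4u}},
\]
or, absorbing powers of $u$, that $\partial_t^\sigma f(t)=u^{(1-\sigma)/2}\,Q_\sigma(t/\sqrt{u})\,e^{-t^2/(4u)}$ for a polynomial $Q_\sigma$ of degree at most $\sigma+1$. The base case $\sigma=0$ is immediate with $Q_0(v)=v$. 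For the inductive step, differentiating once in $t$ hits the polynomial factor (giving $u^{-1/2}Q_\sigma'(t/\sqrt u)$) and the exponential factor (giving $-\frac{t}{2u}Q_\sigma(t/\sqrt u)=-u^{-1/2}\tfrac{1}{2}(t/\sqrt u)\,Q_\sigma(t/\sqrt u)$); both contributions carry the extra $u^{-1/2}$, confirming the claimed power of $u$ and the recursion $Q_{\sigma+1}(v)=Q_\sigma'(v)-\tfrac{1}{2}vQ_\sigma(v)$ for the polynomial, whose degree grows by exactly one at each step.

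Once the structural formula is in place, the final bound follows from a pointwise estimate on the Gaussian-times-polynomial factor: for any polynomial $Q$ of fixed degree there is a constant $C$ with
\[
\left|Q(v)\right|e^{-v^2/4}\le C\,e^{-v^2/8},\qquad v\in\mathbb{R},
\]
since $|Q(v)|e^{-v^2/8}$ is continuous and tends to $0$ as $|v|\to\infty$, hence is bounded. Applying this with $v=t/\sqrt u$ splits $e^{-t^2/(4u)}$ as $e^{-t^2/(8u)}e^{-t^2/(8u)}$, uses one half to tame $Q_\sigma$, and leaves the factor $e^{-t^2/(8u)}u^{(1-\sigma)/2}$ demanded by the statement. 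I do not anticipate a genuine obstacle here; the only point requiring minor care is bookkeeping the powers of $u$ through the induction, and confirming that the polynomial degree stays controlled so the uniform Gaussian domination applies with a single constant $C=C(\sigma)$.
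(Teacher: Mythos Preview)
Your proof is correct. The induction producing the structural formula $\partial_t^\sigma(te^{-t^2/(4u)})=u^{(1-\sigma)/2}Q_\sigma(t/\sqrt u)\,e^{-t^2/(4u)}$ with $Q_{\sigma+1}(v)=Q_\sigma'(v)-\tfrac12 vQ_\sigma(v)$ is straightforward to verify, and the final Gaussian domination step is standard.

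As for comparison: the paper does not actually prove this lemma. It quotes the case $\sigma\ge 1$ from \cite[Lemma~4]{BCCFR} and merely remarks that the case $\sigma=0$ ``also holds, as the reader can check.'' Your argument is therefore strictly more informative than what appears in the paper, supplying a self-contained proof in place of an external citation. The approach you use --- rescaling by $v=t/\sqrt u$ to reduce to a one-variable polynomial-times-Gaussian bound --- is essentially the natural one and is almost certainly what underlies the cited result as well (the recursion you obtain is, up to normalization, the Hermite-polynomial recursion). The only cosmetic point: you could streamline the opening by going directly to the clean formulation $\partial_t^\sigma f(t)=u^{(1-\sigma)/2}Q_\sigma(t/\sqrt u)e^{-t^2/(4u)}$ rather than first writing the slightly awkward version with the extra $\sqrt u$ factor.
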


\begin{lem}\label{cotasderivadasPoisson}
    Let $n,k\in \mathbb{N}$ and $\ell=0,1$. Then, there exists $C>0$ such that for every $x,y,t\in (0,\infty)$,
    \begin{equation*}
        \left|t^{n+k} \partial_y^\ell \partial_x^{n+1-\ell}\partial_t^k P^\alpha_t (x,y)\right|\leq C\int_0^1 \left|\partial_y^\ell \partial_x^{n+1-\ell} W_{-2\log r}^\alpha(x,y)\right| \frac{t^{n+k} e^{t^2/(16\log r)}}{(-\log r)^{\frac k2+1}} \frac{dr}{r}.
    \end{equation*}
\end{lem}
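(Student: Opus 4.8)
The plan is to differentiate the subordination formula \eqref{subor} under the integral sign and then invoke Lemma~\ref{Lema4BCCFR}. Writing the Poisson kernel as
\[
P_t^\alpha(x,y)=\frac{1}{2\sqrt{\pi}}\int_0^\infty \left(te^{-t^2/(4u)}\right)W_u^\alpha(x,y)\,\frac{du}{u^{3/2}},
\]
I would first note that the spatial derivatives $\partial_y^\ell\partial_x^{n+1-\ell}$ act only on the factor $W_u^\alpha(x,y)$, whereas the time derivative $\partial_t^k$ acts only on $te^{-t^2/(4u)}$. Granting that these differentiations may be performed inside the integral, this gives
\[
\partial_y^\ell\partial_x^{n+1-\ell}\partial_t^k P_t^\alpha(x,y)=\frac{1}{2\sqrt{\pi}}\int_0^\infty \partial_t^k\!\left(te^{-t^2/(4u)}\right)\partial_y^\ell\partial_x^{n+1-\ell}W_u^\alpha(x,y)\,\frac{du}{u^{3/2}}.
\]

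Next I would estimate the $t$-factor by Lemma~\ref{Lema4BCCFR} with $\sigma=k$, which yields $\left|\partial_t^k\!\left(te^{-t^2/(4u)}\right)\right|\le Ce^{-t^2/(8u)}u^{(1-k)/2}$. Inserting this bound and multiplying by $t^{n+k}$, and using $u^{(1-k)/2}u^{-3/2}=u^{-k/2-1}$, I obtain
\[
\left|t^{n+k}\partial_y^\ell\partial_x^{n+1-\ell}\partial_t^k P_t^\alpha(x,y)\right|\le Ct^{n+k}\int_0^\infty e^{-t^2/(8u)}u^{-\frac k2-1}\left|\partial_y^\ell\partial_x^{n+1-\ell}W_u^\alpha(x,y)\right|du.
\]
Finally I would perform the change of variables $u=-2\log r$, so that $r\in(0,1)$, $du=-\tfrac{2}{r}\,dr$, the exponential becomes $e^{-t^2/(8u)}=e^{t^2/(16\log r)}$ (recall $\log r<0$), and $u^{-k/2-1}=(-2\log r)^{-k/2-1}$. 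Absorbing the resulting powers of $2$ into the constant $C$ produces exactly
\[
\left|t^{n+k}\partial_y^\ell\partial_x^{n+1-\ell}\partial_t^k P_t^\alpha(x,y)\right|\le C\int_0^1\left|\partial_y^\ell\partial_x^{n+1-\ell}W_{-2\log r}^\alpha(x,y)\right|\frac{t^{n+k}e^{t^2/(16\log r)}}{(-\log r)^{\frac k2+1}}\,\frac{dr}{r},
\]
which is the claimed estimate.

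The main obstacle is justifying the interchange of the derivatives with the integral in the first step, which amounts to dominating, uniformly for $t,x,y$ in compact subsets of $(0,\infty)$, the integrals obtained after formally differentiating. Near $u=\infty$ one uses that $W_u^\alpha(x,y)$ and its spatial derivatives remain bounded (the semigroup converges to its projection onto the constants), so that the factor $u^{-3/2}$ and its differentiated analogues secure convergence; near $u=0$, the rapidly vanishing factor $e^{-t^2/(4u)}$ (respectively $e^{-t^2/(8u)}$ after differentiating in $t$) dominates any polynomial growth in $u$ of the derivatives of $W_u^\alpha$ arising from the Hermite-polynomial expressions in Lemma~\ref{lem3.7}. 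With these dominating bounds at hand, the dominated convergence theorem legitimizes the differentiation under the integral sign, and the computation above goes through verbatim.
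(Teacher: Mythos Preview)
Your proof is correct and follows essentially the same route as the paper: differentiate the subordination formula, apply Lemma~\ref{Lema4BCCFR} with $\sigma=k$, and change variables $u=-2\log r$. The only differences are cosmetic---the paper performs the change of variables before applying Lemma~\ref{Lema4BCCFR} rather than after, and it does not pause to justify the interchange of derivatives and integral, which you rightly flag and address.
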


\begin{proof}The proof is immediate by the subordination formula and Lemma~\ref{Lema4BCCFR} with $u=-2\log r$ and $\sigma=k$, which give
\begin{align*}
&\left|t^{n+k} \partial_y^\ell \partial_x^{n+1-\ell}\partial_t^k P^\alpha_t (x,y)\right|\\
&\quad \leq C\int_0^1 \left|\partial_y^\ell \partial_x^{n+1-\ell} W_{-2\log r}^\alpha(x,y)\right| t^{n+k}\left|  \partial_t^k\left(t e^{t^2/(8\log r)}\right)\right| \frac{dr}{r(-\log r)^{\frac32}}\\
&\quad \leq C\int_0^1 \left|\partial_y^\ell \partial_x^{n+1-\ell} W_{-2\log r}^\alpha(x,y)\right| \frac{t^{n+k} e^{t^2/(16\log r)}}{(-\log r)^{\frac k2+1}} \frac{dr}{r}. \qedhere
\end{align*}
\end{proof}

The following lemma establishes some properties for the function $q$, which are all straightforward from its definition.

\begin{lem}\label{several-ineq} 
Let $x,y\in (0,\infty)$, $r\in (0,1)$ and $s\in (-1,1)$. Then, the following estimates hold.
\begin{enumerate}[label=(E\arabic*), start=0]
    \item \label{eq-D0} $\frac{q(rx,y,s)}{1-r^2}-y^2=\frac{q(x,ry,s)}{1-r^2}-x^2$;
    \item \label{eq-D1} $q(x,ry,s) = 
    (x-ry)^2 + 2xyr(1-s)$ and so
    \item \label{eq-D5} $q(x,ry,s)\geq 2xyr(1-s)$;
    \item \label{eq-D3} $q(x,ry,s)%\geq x^2+y^2 r^2
    \geq r^2(x^2+y^2)$ when $s\in (-1,0)$ and
    \item \label{eq-D4} $q(x,ry,s)%=(x^2+r^2y^2)(1-s)+s(x-ry)^2
    \geq r^2(x^2+y^2)(1-s)$ when $s\in [0,1)$;
    \item \label{eq-D7} $q(x,ry,s) \geq x^2r^2(1-s)$;
    \item \label{eq-D8} $q(x,ry,s) \geq y^2r^2(1-s)$;
    \item \label{eq-D9} $q(x,ry,s) \geq (x-rys)^2+y^2r^2(1-s^2)$;
    \item \label{eq-D6} $q(rx,y,s)%=(rx-ys)^2+y^2(1-s^2)
    \geq (rx-ys)^2$.
    \end{enumerate}
\end{lem}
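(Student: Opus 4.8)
The plan is to derive every estimate from two exact algebraic identities for $q$, supplemented only by the elementary facts that $r\in(0,1)$ and $1\pm s>0$ for $s\in(-1,1)$; no analysis is needed beyond completing squares and discarding nonnegative summands. First I would record the two decompositions obtained by expanding a square,
\[
q(x,ry,s)=(x-ry)^2+2xyr(1-s),\qquad q(x,ry,s)=(x-rys)^2+y^2r^2(1-s^2),
\]
together with the symmetric version $q(rx,y,s)=(rx-ys)^2+x^2r^2(1-s^2)$. The first identity is exactly \ref{eq-D1}, the second is \ref{eq-D9} (in fact an equality), and the third yields \ref{eq-D6} upon dropping $x^2r^2(1-s^2)\ge0$. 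For \ref{eq-D0} I would merely insert the definition of $q$ into both sides, clear the factor $1-r^2$, and verify that each side collapses to $\frac{r^2(x^2+y^2)-2rxys}{1-r^2}$; this is a symmetric rearrangement carrying no inequality.

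With these identities in hand, most of the remaining bounds are read off by keeping a single nonnegative term. Thus \ref{eq-D5} follows from \ref{eq-D1} after discarding $(x-ry)^2\ge0$. For \ref{eq-D8} I would drop the square in \ref{eq-D9} to get $q\ge y^2r^2(1-s^2)$ and use $1-s^2=(1-s)(1+s)$; for \ref{eq-D7} I would instead complete the square in $y$ to obtain the uniform lower bound $q(x,ry,s)\ge x^2(1-s^2)$, then compare with $x^2r^2(1-s)$ via $1-s^2\ge1-s$ and $x^2\ge r^2x^2$. For \ref{eq-D3} I would use the sign of the cross term: when $s\in(-1,0)$ one has $-2rxys\ge0$, so $q(x,ry,s)\ge x^2+r^2y^2\ge r^2(x^2+y^2)$, the last step again using $r<1$.

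The only inequality that is not an immediate consequence of discarding a term is \ref{eq-D4}, which is where I expect the (mild) main obstacle to lie. Here I would form $q(x,ry,s)-r^2(x^2+y^2)(1-s)$, view it as an upward parabola in $x$ with positive leading coefficient $1-r^2(1-s)>0$, and complete the square to find
\begin{align*}
q(x,ry,s)-r^2(x^2+y^2)(1-s)
&=\bigl(1-r^2(1-s)\bigr)\Bigl(x-\tfrac{rys}{1-r^2(1-s)}\Bigr)^2\\
&\quad+\frac{r^2y^2s(1-s)(1-r^2)}{1-r^2(1-s)}\,,
\end{align*}
whose remainder is nonnegative precisely because $s\ge0$, $1-s>0$ and $1-r^2>0$; equivalently, the discriminant of the quadratic in $x$ equals $4r^2y^2s(1-s)(r^2-1)\le0$.

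The single point demanding care throughout is the bookkeeping of the sign of $s$. The factors $(1-s)$ appearing in \ref{eq-D4}, \ref{eq-D7} and \ref{eq-D8} are most naturally controlled in the range $s\in[0,1)$ — the range relevant near the diagonal singularity $s\to1^-$ — where the steps $1-s^2\ge1-s$ and the nonnegativity of the remainder above are valid, while the complementary range $s\in(-1,0)$ is governed by the cruder estimate \ref{eq-D3}. Apart from this case split, each assertion is a one-line consequence of the identities recorded at the outset.
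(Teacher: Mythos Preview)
The paper gives no proof of this lemma at all; it simply asserts that the estimates ``are all straightforward from its definition.'' Your explicit verification via the two square-completions
\[
q(x,ry,s)=(x-ry)^2+2xyr(1-s)=(x-rys)^2+y^2r^2(1-s^2)
\]
and the symmetric one for $q(rx,y,s)$ is precisely the intended elementary route, and your handling of \ref{eq-D4} by viewing the difference as a quadratic in $x$ with nonpositive discriminant is clean and correct.

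Your closing remark is in fact sharper than the paper. As literally stated, \ref{eq-D7} and \ref{eq-D8} are \emph{false} for general $s\in(-1,1)$: e.g.\ $x=1$, $r=0.9$, $y=0.1$, $s=-0.9$ gives $q(x,ry,s)\approx1.17<1.54=x^2r^2(1-s)$. Your argument establishes them only for $s\in[0,1)$, which is all that the later applications require; for $s\in(-1,0)$ the cruder bound \ref{eq-D3} (which you note) yields $q\ge r^2x^2\ge\tfrac12x^2r^2(1-s)$, so the estimates hold for all $s$ with an innocuous constant $\tfrac12$. Flagging this case split is a genuine improvement over the paper's bare assertion.
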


Many of the estimates on the kernels of the operators considered here will require the boundedness properties of the functions $\varphi$, $\psi$ and $\xi_n$, for $n\in \mathbb N$, defined in the next lemma.

\begin{lem}\label{func-acot} Let $n\in \mathbb N$. The functions
\[\varphi(r)=\frac{1-r^2}{-\log r}, \quad \psi(r)=\frac{r(-\log r)}{1-r^2}, \quad \xi_n(r)=\frac{r^n(-\log r)^{\frac n2-1}}{(1-r^2)^{\frac n2-1}},\]
with $\varphi(0)=\psi(0)=\xi_n(0)=0$, are bounded on $[0,1]$.
\end{lem}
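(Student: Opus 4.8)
The plan is to reduce the global boundedness on $[0,1]$ to the study of the two endpoints. Each of $\varphi$, $\psi$ and $\xi_n$ is a quotient of continuous functions whose denominators, $-\log r$ and $1-r^2$, are strictly positive on the open interval $(0,1)$; hence all three are continuous there. Consequently, if I show that each function has a finite limit as $r\to 0^+$ and as $r\to 1^-$, then it extends to a continuous function on the compact interval $[0,1]$ and is therefore bounded. I will also check that the limits at $0$ coincide with the prescribed values $\varphi(0)=\psi(0)=\xi_n(0)=0$, so that the continuous extension is exactly the function in the statement.

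The single computation doing all the work is the limit at $r=1$. I would establish
\[
\lim_{r\to 1^-}\frac{1-r^2}{-\log r}=2,
\qquad\text{equivalently}\qquad
\lim_{r\to 1^-}\frac{-\log r}{1-r^2}=\frac12,
\]
either by L'Hôpital's rule (differentiating numerator and denominator gives $\frac{-2r}{-1/r}=2r^2\to 2$) or by the expansions $1-r^2\sim 2(1-r)$ and $-\log r\sim (1-r)$ as $r\to 1$. This immediately gives $\varphi(r)\to 2$. Writing $\psi(r)=r\cdot\frac{-\log r}{1-r^2}=\frac{r}{\varphi(r)}$ and $\xi_n(r)=r^n\bigl(\frac{-\log r}{1-r^2}\bigr)^{n/2-1}=r^n\varphi(r)^{1-n/2}$, the same limit yields $\psi(r)\to\frac12$ and $\xi_n(r)\to 2^{1-n/2}$ as $r\to1^-$.

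For the limit at $r=0^+$, the denominator $1-r^2\to 1$, so only the interplay between $r^n$ and powers of $-\log r$ matters. I would use the elementary fact that $r^n(-\log r)^m\to 0$ as $r\to 0^+$ for every $n\ge 1$ and every real $m$ (seen at once via $r=e^{-s}$, $s\to\infty$). Then $\varphi(r)=\frac{1-r^2}{-\log r}\to 0$, $\psi(r)=\frac{r(-\log r)}{1-r^2}\to 0$ (here $r(-\log r)\to 0$), and, since $(1-r^2)^{n/2-1}\to 1$, one has $\xi_n(r)\sim r^n(-\log r)^{n/2-1}\to 0$. All three limits equal the prescribed value $0$, so the continuous extensions agree with the given functions.

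The only point requiring a little care — and the closest thing to an obstacle — is the behavior of $\xi_n$ at $r=0$ when $n\ge 3$, where the exponent $n/2-1$ is positive and the factor $(-\log r)^{n/2-1}$ blows up; boundedness there rests precisely on the power $r^n$ dominating the logarithmic growth. Writing $\xi_n$ in terms of $\varphi$ near $r=0$ is unhelpful (since $\varphi\to 0$ and the exponent $1-n/2$ is negative), so I would treat this endpoint directly through the power-beats-logarithm estimate rather than through $\varphi$.
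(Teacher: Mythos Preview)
Your argument is correct. The paper states this lemma without proof, treating it as routine; your approach via continuity on $(0,1)$ plus finite limits at both endpoints is the natural one and is exactly what anyone would do to fill in the details. One tiny cosmetic point: the statement only prescribes the value at $r=0$ and is silent about $r=1$ (where the formulas are again indeterminate), so strictly speaking you are proving boundedness of the continuous extension to $[0,1]$; this is clearly what is intended and what the paper uses.
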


In what follows, we establish several estimates that will led us to obtain conditions \eqref{C1} and \eqref{C2} in order to prove Theorem~\ref{endpoints} in Section~\ref{mainthm}.

In what follows, given $I\in \mathcal B_1$, we denote with $c_I$ and $r_I$ the center and radius of $I$, respectively.

\begin{lem}
\label{lem3.1} Let $\alpha>-\frac12$. There exists $C>0$ such that, for any interval $I\in \mathcal B_1$, 
\[\sup_{y\in I} r_I \int_{(2I)^c} K(x,y) d\gamma_\alpha(x)\leq C\]
where
\[K(x,y)=\int_0^1 \int_{-1}^1 \frac{e^{-\frac{q(x,ry,s)}{2(1-r^2)}+x^2}}{(1-r^2)^{\alpha+5/2}}\Pi_\alpha(s) dsdr.\]
\end{lem}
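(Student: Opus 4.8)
The plan is to absorb the Gaussian weight, complete the square in $x$ so that the inner integral becomes an explicit Gaussian integral, and then reduce everything to a single integral in $r$ whose only genuine singularity, at $r=1$, is exactly balanced by the prefactor $r_I$.

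\emph{Reduction.} First I would cancel the factor $e^{x^2}$ appearing in $K$ against the Gaussian weight $e^{-x^2}$ of $d\gamma_\alpha(x)=\frac{2}{\Gamma(\alpha+1)}x^{2\alpha+1}e^{-x^2}dx$, so that (up to the constant $\frac{2}{\Gamma(\alpha+1)}$) the quantity to be bounded is
\[
r_I\int_{(2I)^c}x^{2\alpha+1}\int_0^1\int_{-1}^1\frac{e^{-\frac{q(x,ry,s)}{2(1-r^2)}}}{(1-r^2)^{\alpha+5/2}}\,\Pi_\alpha(s)\,ds\,dr\,dx.
\]
At the outset I would record the geometric facts that $|x-y|>r_I$ for every $x\in(2I)^c$ and $y\in I$, and that $r_I\le\min\{c_I,m(c_I)\}$.

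\emph{Separating the variables.} Next I would use the identity $q(x,ry,s)=(x-rys)^2+y^2r^2(1-s^2)$ (the equality behind Lemma~\ref{several-ineq}\,\ref{eq-D9}) to split $e^{-q(x,ry,s)/(2(1-r^2))}=e^{-(x-rys)^2/(2(1-r^2))}\,e^{-y^2r^2(1-s^2)/(2(1-r^2))}$. The first factor lets me evaluate the inner integral $J(r,s):=\int_{(2I)^c}x^{2\alpha+1}e^{-(x-rys)^2/(2(1-r^2))}dx$ as a restricted Gaussian integral centered at $\mu=rys$, bounded by $C\bigl((1-r^2)^{\alpha+1}+|\mu|^{2\alpha+1}(1-r^2)^{1/2}\bigr)$; the second factor controls the $s$-integral, producing after $w=1-s^2$ the gain $\int_{-1}^1 e^{-\frac{y^2r^2(1-s^2)}{2(1-r^2)}}(1-s^2)^{\alpha-1/2}\,ds\le C\min\{1,\,((1-r^2)/(r^2y^2))^{\alpha+1/2}\}$, valid for every $\alpha>-\tfrac12$. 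A pleasant feature is that, when one multiplies the prefactor $(1-r^2)^{-\alpha-5/2}$, the leading part of $J$, and the $s$-gain, the powers $(ry)^{2\alpha+1}$ cancel and the whole integrand in $r$ collapses to size $(1-r^2)^{-3/2}$.

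\emph{The $r$-integral and the cancellation.} Since $(1-r^2)^{-3/2}$ is not integrable at $r=1$, this is where the restriction to $(2I)^c$ must intervene. Using $q(x,ry,s)\ge(x-ry)^2$ (Lemma~\ref{several-ineq}\,\ref{eq-D1}) I would show that for $r$ close to $1$ the kernel carries an exponential cut-off $e^{-cr_I^2/(1-r^2)}$: when $x$ lies to the right of $I$ this is immediate from $x-ry\ge x-y>r_I$; when $x$ lies to the left I would combine $q\ge(x-ry)^2$ with $q\ge y^2r^2(1-s)$ (Lemma~\ref{several-ineq}\,\ref{eq-D8}), distinguishing whether the center $rys$ sits near $c_I$ (so that $|x-c_I|\ge 2r_I$ forces $|x-rys|\gtrsim r_I$) or away from it (so that the $s$-factor is itself exponentially small). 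The decisive computation is then
\[
r_I\int_0^1\frac{e^{-cr_I^2/(1-r^2)}}{(1-r^2)^{3/2}}\,dr\le C,
\]
because the substitution $v=1-r^2$ turns the integral into $\Gamma(\tfrac12)(c r_I^2)^{-1/2}=C/r_I$, exactly absorbing the prefactor. For $r$ bounded away from $1$ the factor $(1-r^2)^{\alpha+5/2}$ is harmless and the Gaussian decay of $J(r,s)$ against $x^{2\alpha+1}$ yields an integrable remainder; Lemma~\ref{func-acot} is available to trade powers of $1-r^2$ for powers of $-\log r$ if the estimate is phrased in the variable $t=-2\log r$.

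\emph{Main obstacle.} The crux is the third step: producing the uniform cut-off $e^{-cr_I^2/(1-r^2)}$ as $r\to1$. Because condition~\eqref{C2} is asymmetric in $x$ and $y$, a pointwise bound $q\gtrsim r_I^2$ is \emph{not} available—$q(x,ry,s)$ can in fact vanish when $x<y$—so one must split according to the side of $I$ on which $x$ lies and follow the position of the moving center $rys$. Establishing this lower bound uniformly over all $I\in\mathcal{B}_1$, that is over all scales of $c_I$ where $r_I$ and $m(c_I)$ trade off, and verifying that it cancels the prefactor $r_I$ precisely, is the delicate part; the remaining integrations are routine.
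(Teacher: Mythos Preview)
Your overall strategy---reducing everything to an integrand of size $(1-r^2)^{-3/2}$ in $r$ and then extracting an exponential cutoff near $r=1$ from the restriction to $(2I)^c$---is sound and can be made into a proof, but it differs from the paper's argument. The paper also splits the $r$-integral, at the explicit $y$-dependent threshold $r_0 = 1 - \frac{r_I}{2(1+y)}$. For $r>r_0$ it uses the elementary inequality $|x-ry|\ge |x-y|-y(1-r)\ge |x-y|/2$ (its display~\eqref{eq-D2}), valid for \emph{every} $x\in(2I)^c$, and then invokes \cite[Lemmas~2.1 and~2.2]{NSz} to convert the $s$-integral into $C/\mathfrak{m}_\alpha(I(x,|x-y|/2))$; integrating first in $r$ and then in $x$ over $(2I)^c$ yields $C/r_I$. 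For $r<r_0$ it integrates in $x$ over the full half-line via the $L^1$-contractivity of the Hankel translation ${}_\alpha\tau_{ry}$, obtaining $\int_0^{r_0}(1-r^2)^{-3/2}\,dr\le C((1+y)/r_I)^{1/2}$, which after multiplication by $r_I$ is bounded because $r_I(1+y)\le C$ for $I\in\mathcal{B}_1$. Your direct Gaussian evaluation of $J(r,s)$ together with the $s$-gain replaces both external ingredients (the Nowak--Szarek lemmas and the Hankel translation) by elementary moment bounds, which is a legitimate and more self-contained route to the same $(1-r^2)^{-3/2}$ integrand.

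Where your write-up goes astray is the diagnosis of the ``main obstacle.'' Once one splits at $r_0$, the pointwise bound $q(x,ry,s)\ge(x-ry)^2\ge |x-y|^2/4>r_I^2/4$ holds for \emph{all} $x\in(2I)^c$, left or right: the one-line estimate $|x-ry|\ge |x-y|-(1-r)y\ge |x-y|-r_I/2\ge |x-y|/2$ does not depend on which side of $I$ the point $x$ lies. Your proposed case analysis on the position of $rys$ relative to $c_I$ is therefore unnecessary; worse, the branch ``$rys$ far from $c_I$ forces the $s$-factor to be exponentially small'' does not follow without already assuming $r>r_0$ (for general $r$ one may have $rys=x$ with $s$ close to $1$), so that detour would not close the argument on its own. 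The correct fix is simply to make the split at $r_0$ explicit: for $r>r_0$ your cutoff $e^{-cr_I^2/(1-r^2)}$ is then immediate from~\ref{eq-D1} and gives $r_I\int_{r_0}^1(1-r^2)^{-3/2}e^{-cr_I^2/(1-r^2)}\,dr\le C$, while for $r<r_0$ your bound $(1-r^2)^{-3/2}$ (with no cutoff) integrates to $C(1-r_0)^{-1/2}=C((1+y)/r_I)^{1/2}$, and $r_I$ times this is bounded by the admissibility of $I$.
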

\begin{proof}
Assume that $I\in \mathcal{B}_1$ and $y\in I$. For every $r\in \left(1-\frac{r_I}{2(1+y)},1\right)$ and $x\notin 2I$, by~\ref{eq-D1} we get
\begin{align}\label{eq-D2}
    \sqrt{q(x,r y, s)} &\geq |x-ry| \geq |x-y| - (1-r)y \geq |x-y| -  r_I\frac{ y}{2(1+y)} \nonumber\\
    &\geq |x-y| - \frac{r_I}{2} \geq \frac{|x-y|}{2},
\end{align}
for any $s\in(-1,1)$.

It yields
\begin{equation*}
    \begin{split}
        \int_{-1}^1 &\int_{1-\frac{r_I}{2(1+y)}}^1 
        e^{-\frac{q(x,ry,s)}{2(1-r^2)}} \frac{dr}{(1-r^2)^{\alpha + 5/2}} \Pi_{\alpha} (s) ds 
        \\ & \leq C \int_{-1}^1 \int_{1-\frac{r_I}{2(1+y)}}^1
        e^{-\frac{q(x,ry,s)}{4(1-r^2)}}
        \frac{e^{-\frac{|x-y|^2}{16(1-r^2)}}}{(1-r^2)^{\alpha + 5/2}} dr\Pi_{\alpha} (s) ds 
        \\ & \leq C
        \int_{1-\frac{r_I}{2(1+y)}}^1 \frac{e^{-\frac{|x-y|^2}{16(1-r^2)}}}{(1-r^2)^{3/2}} \int_{-1}^1 \frac{\Pi_\alpha(s)}{q(x,ry,s)^{\alpha +1}} ds dr,
    \end{split}
\end{equation*}
for $x\notin 2I$. According to~\cite[Lemma~2.1]{NSz}  and \eqref{eq-D2} we obtain
\begin{equation*}
    \begin{split}
        \int_{-1}^1 \int_{1-\frac{r_I}{2(1+y)}}^1 &
         \frac{e^{-\frac{q(x,ry,s)}{2(1-r^2)}}}{(1-r^2)^{\alpha + 5/2}} dr\Pi_{\alpha} (s) ds 
        \\ & \leq C 
        \int_{1-\frac{r_I}{2(1+y)}}^1
        \frac{e^{-\frac{|x-y|^2}{16(1-r^2)}}}{(1-r^2)^{3/2}} \frac{dr}{\mathfrak{m}_\alpha(I(x,|x-ry|))}
        \\ & \leq C
        \int_{1-\frac{r_I}{2(1+y)}}^1
        \frac{e^{-\frac{|x-y|^2}{16(1-r^2)}}}{(1-r^2)^{3/2}} \frac{dr}{\mathfrak{m}_\alpha(I(x,|x-y|/2))},
    \end{split}
\end{equation*}
for $x\notin 2I$. Here, $\mathfrak{m}_\alpha$ denotes the measure defined by $d\mathfrak{m}_\alpha(x) = x^{2\alpha +1} dx$ in $(0,\infty)$.

Also, we can write
\[\int_{1-\frac{r_I}{2(1+y)}}^1
        \frac{e^{-\frac{|x-y|^2}{16(1-r^2)}}}{(1-r^2)^{3/2}} dr \leq
        C \int_0^{\infty} \frac{e^{-\frac{|x-y|^2}{u}}}{u^{3/2}} du
        \leq \frac{C}{|x-y|},\]
for $x\notin 2I$. Then,
\begin{equation*}
\begin{split}
     \int_{-1}^1 \int_{1-\frac{r_I}{2(1+y)}}^1 &
         \frac{e^{-\frac{q(x,ry,s)}{2(1-r^2)}}}{(1-r^2)^{\alpha + 5/2}} dr\Pi_{\alpha} (s) ds\leq \frac{C}{|x-y|\mathfrak{m}_\alpha(I(x,|x-y|/2))}
\end{split}
\end{equation*}
for any $x\notin 2I$.

By using~\cite[Lemma~2.2]{NSz} and~\ref{eq-D0},
\begin{equation*}
\begin{split}
    \int_{(2I)^c}
     \int_{-1}^1 \int_{1-\frac{r_I}{2(1+y)}}^1 &
         \frac{e^{-\frac{q(x,ry,s)}{2(1-r^2)}+x^2}}{(1-r^2)^{\alpha + 5/2}} dr\Pi_{\alpha} (s) ds d\gamma_\alpha(x)
        \\ & \leq C
        \int_{(2I)^c} 
        \frac{x^{2\alpha +1}}{|x-y|^{2}\left(
        x + \frac{|x-y|}{2}\right)^{2\alpha +1}}dx
        \\ & \leq C
        \int_{(2I)^c} \frac{1}{|x-y|^2} dx
        \\ & \leq C
        \int_{r_I}^\infty \frac{1}{z^2} dz
        \\ & \leq \frac{C}{r_I}.
\end{split}
\end{equation*}
Here $C>0$ does not depend on $I$ or $y\in I$.

To deal with the remaining term we consider, for every $x\in (0,\infty)$, the Hankel translation defined by (see~\cite{GoS})
\begin{equation*}
    {}_{\alpha}\tau_x (f)(y) = 
    \int_{-1}^1 f\left(\sqrt{x^2 + y^2 - 2xys}\right) \Pi_\alpha(s)ds, \;\;x,\,y\in(0,\infty),
\end{equation*}
which is contractive in $L^p((0,\infty), \mathfrak{m}_\alpha)$ for every $x\in (0,\infty)$ and $1\leq p\leq \infty$ (\cite[p.~657]{GoS}). 

We deduce that, for every $y\in I$, 
\begin{equation*}
\begin{split}
    &\int_{(2I)^c}
     \int_{-1}^1 \int_0^{1-\frac{r_I}{2(1+y)}} 
         \frac{e^{-\frac{q(x,ry,s)}{2(1-r^2)}+x^2}}{(1-r^2)^{\alpha + 5/2}}dr \Pi_{\alpha} (s) ds d\gamma_\alpha(x)
        \\ & \leq 
        \int_0^{1-\frac{r_I}{2(1+y)}}
        \frac{1}{(1-r^2)^{\alpha + 5/2}}  \int_{0}^\infty
     \int_{-1}^1
     e^{-\frac{q(x,ry,s)}{2(1-r^2)}} \Pi_{\alpha} (s) ds d\mathfrak{m}_\alpha(x) dr
     \\ & =
     \int_0^{1-\frac{r_I}{2(1+y)}}
        \frac{1}{(1-r^2)^{\alpha + 5/2}}  \int_{0}^\infty
        {}_{\alpha}{\tau}_{ry}
        \left(e^{-\frac{z^2}{2(1-r^2)}}\right)(x)d\mathfrak{m}_\alpha(x) dr
         \\ & \leq
     \int_0^{1-\frac{r_I}{2(1+y)}}
        \frac{1}{(1-r^2)^{\alpha + 5/2}}  \int_{0}^\infty
         e^{-\frac{z^2}{2(1-r^2)}}d\mathfrak{m}_\alpha(z) dr
         \\ & \leq
     \int_0^{1-\frac{r_I}{2(1+y)}}
        \frac{dr}{(1-r^2)^{3/2}} 
         \\ & \leq
     \int_{\frac{r_I}{2(1+y)}}^\infty
        \frac{dr}{r^{3/2}} 
         \\ & \leq
     C\left(\frac{1+y}{r_I}\right)^{1/2}.
\end{split}
\end{equation*}

By combining the above estimates we get
\begin{equation*}
    \begin{split}
        \sup_{y\in I} r_I \int_{(2I)^c} K(x,y) d\gamma_\alpha (x) & 
        \leq C \sup_{y\in I} 
        \left(1 + (r_I y)^{1/2}\right)
        \\ & \leq C \left(
        1+ (r_I(r_I+c_I))^{1/2}
        \right)
        \\ & \leq C,
    \end{split}
\end{equation*}
where $C>0$ does not depend on $I$.
\end{proof}

\begin{lem}
\label{lem3.1.1}
    Let $\alpha>-\frac12$ and $\omega>0$. Then there exists $C>0$ such that, for every $I\in \mathcal{B}_1$,
    \begin{equation*}
        \sup_{y\in I}r_I\int_{(2I)^c} K_\omega(x,y) d\gamma_\alpha(x)\leq C,
    \end{equation*}
    where for $x,y\in(0,\infty)$ and $x\neq y$.
    \begin{equation*}
        K_\omega(x,y) =  \int_{-1}^1 \int_0^1 
       e^{-\frac{q(x,ry,s)}{2(1-r^2)}+x^2}
        \frac{(-\log r)^{\omega-1}}{(1-r^2)^{\alpha + 3/2}}
        \Pi_\alpha(s)dsdr.
    \end{equation*}
\end{lem}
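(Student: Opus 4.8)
The plan is to follow closely the proof of Lemma~\ref{lem3.1}, observing that the integrand of $K_\omega$ differs from that of $K$ only by the factor $(-\log r)^{\omega-1}(1-r^2)$. I fix $I\in\mathcal B_1$ and $y\in I$ and split the $r$-integral defining $K_\omega$ at $r=1-\frac{r_I}{2(1+y)}$, treating the region near $r=1$ and the region away from it separately. Throughout I use that $I\in\mathcal B_1$ forces $r_I\le m(c_I)=\min\{1,1/c_I\}$; in particular $r_I\le 1$, and since $1+y\ge 1$ the splitting point satisfies $1-\frac{r_I}{2(1+y)}\ge\frac12$. Hence in the first region $r\ge\frac12$, and there $-\log r$ and $1-r^2$ are comparable: the inequality $1-r^2\le C(-\log r)$ is the boundedness of $\varphi$ in Lemma~\ref{func-acot}, while $-\log r\le C(1-r^2)$ holds on $[\frac12,1]$ by continuity.

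For $r\in(1-\frac{r_I}{2(1+y)},1)$ the comparability gives $(-\log r)^{\omega-1}\le C(1-r^2)^{\omega-1}$ (for either sign of $\omega-1$), so that
\[
\frac{(-\log r)^{\omega-1}}{(1-r^2)^{\alpha+3/2}}\le C(1-r^2)^{\omega}\frac{1}{(1-r^2)^{\alpha+5/2}}\le\frac{C}{(1-r^2)^{\alpha+5/2}},
\]
using $0<1-r^2\le1$ and $\omega>0$. Thus the integrand of $K_\omega$ is pointwise dominated by a constant times that of $K$ on this region, and the corresponding part of $\int_{(2I)^c}K_\omega(x,y)\,d\gamma_\alpha(x)$ is estimated exactly as in Lemma~\ref{lem3.1} (via \ref{eq-D1}, estimate \eqref{eq-D2}, \cite[Lemmas~2.1 and~2.2]{NSz}, and \ref{eq-D0}), yielding a bound $C/r_I$. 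Multiplying by $r_I$ gives $\le C$.

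For $r\in(0,1-\frac{r_I}{2(1+y)})$ I argue as in Lemma~\ref{lem3.1}: writing $e^{x^2}\,d\gamma_\alpha(x)=C\,d\mathfrak{m}_\alpha(x)$, extending the $x$-integral to $(0,\infty)$, recognizing the inner $s$-integral as ${}_\alpha\tau_{ry}\big(e^{-z^2/(2(1-r^2))}\big)(x)$ and invoking its $L^1(\mathfrak{m}_\alpha)$-contractivity together with $\int_0^\infty e^{-z^2/(2(1-r^2))}\,d\mathfrak{m}_\alpha(z)=C(1-r^2)^{\alpha+1}$, this part of $r_I\int_{(2I)^c}K_\omega\,d\gamma_\alpha$ is bounded by
\[
Cr_I\int_0^{1-\frac{r_I}{2(1+y)}}\frac{(-\log r)^{\omega-1}}{(1-r^2)^{1/2}}\,dr.
\]
On $(0,\frac12)$ the integrand is $\le C(-\log r)^{\omega-1}$, which is integrable for every $\omega>0$ (substitute $r=e^{-v}$), contributing $Cr_I\le C$. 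On $(\frac12,1-\frac{r_I}{2(1+y)})$ I again use $(-\log r)^{\omega-1}\le C(1-r^2)^{\omega-1}$, reducing matters to $r_I\int_{1/2}^{1-\delta}(1-r^2)^{\omega-3/2}\,dr$ with $\delta=\frac{r_I}{2(1+y)}$.

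The main obstacle is this last integral, whose size depends on $\omega$. The substitution $u=1-r$ shows it is comparable to $\int_\delta^{1/2}u^{\omega-3/2}\,du$, which is bounded for $\omega>\frac12$, of order $\log(1/\delta)$ for $\omega=\frac12$, and of order $\delta^{\omega-1/2}$ for $0<\omega<\frac12$. Recalling $\delta=\frac{r_I}{2(1+y)}$ and $1+y\le C(1+c_I)$, multiplication by $r_I$ gives the three bounds $Cr_I$, $Cr_I\log\frac{1+c_I}{r_I}$, and $Cr_I^{1/2+\omega}(1+c_I)^{1/2-\omega}$; each is $\le C$ once $r_I\le\min\{1,1/c_I\}$ is used (for instance $r_I\log\frac{1+c_I}{r_I}\le r_I\log\frac1{r_I}+r_I\log(1+c_I)$, both terms being bounded, and $r_I^{1/2+\omega}(1+c_I)^{1/2-\omega}\le Cc_I^{-2\omega}$ when $c_I>1$). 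Combining the near and far estimates yields $\sup_{y\in I}r_I\int_{(2I)^c}K_\omega(x,y)\,d\gamma_\alpha(x)\le C$ with $C$ independent of $I$, which is the assertion.
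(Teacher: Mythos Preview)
Your proof is correct and follows the paper's approach: the same splitting of the $r$-integral at $r=1-\frac{r_I}{2(1+y)}$, the same comparability $(-\log r)\sim 1-r^2$ on $[\tfrac12,1)$ to handle the near region via Lemma~\ref{lem3.1}, and the same Hankel-translation reduction for the far region. The only difference is that for the far piece the paper splits cases at $\omega=1$ (for $\omega\in(0,1]$ it bounds $K_{\omega,2}$ pointwise by the kernel of Lemma~\ref{lem3.1} using boundedness of $\varphi$, while for $\omega>1$ it simply notes $\int_0^1(-\log r)^{\omega-1}(1-r)^{-1/2}\,dr<\infty$), which avoids your more laborious case analysis at $\omega=\tfrac12$ and the appeal to $r_I\le m(c_I)$.
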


\begin{proof}
    We proceed as in the proof of Lemma~\ref{lem3.1}. Let $I\in \mathcal{B}_1$. Since \[1-\frac{r_I}{2(1+y)}\geq \frac12,\quad y\in (0,\infty),\] and using the boundedness of the function $\varphi$ given in Lemma~\ref{func-acot}, it follows that
    \begin{equation*}
        \begin{split}
            K_{\omega,1}(x,y) & = 
            \int_{-1}^1 \int_{1-\frac{r_I}{2(1+y)}}^1
        e^{-\frac{q(x,ry,s)}{2(1-r^2)}+x^2}
        \frac{(-\log r)^{\omega-1}}{(1-r^2)^{\alpha + 3/2}}
        \Pi_\alpha(s)drds
        \\ & C\leq 
        \int_{-1}^1 \int_{1-\frac{r_I}{2(1+y)}}^1
        e^{-\frac{q(x,ry,s)}{2(1-r^2)}+x^2}
        \frac{ \Pi_\alpha(s)}{(1-r^2)^{\alpha + 5/2 - \omega}}
       drds
       \\ & \leq 
        \int_{-1}^1 \int_{1-\frac{r_I}{2(1+y)}}^1
        e^{-\frac{q(x,ry,s)}{2(1-r^2)}+x^2}
        \frac{ \Pi_\alpha(s)}{(1-r^2)^{\alpha + 5/2}}
       drds
       \\ & \leq 
        \int_{-1}^1 \int_{0}^1
        e^{-\frac{q(x,ry,s)}{2(1-r^2)}+x^2}
        \frac{ \Pi_\alpha(s)}{(1-r^2)^{\alpha + 5/2}}
        drds,
        \end{split}
    \end{equation*}
for $x,y\in(0,\infty)$ and $x\neq y$. Lemma~\ref{lem3.1} implies that
\begin{equation*}
        \sup_{y\in I}r_I\int_{(2I)^c} K_{\omega,1}(x,y) d\gamma_\alpha(x)\leq C,
    \end{equation*}
    where $C>0$ does not depend on $I$. 

    On the other hand, we consider, for $x,y\in(0,\infty)$ and $x\neq y$, 
    \begin{equation*}
        K_{\omega,2}(x,y) =  \int_{-1}^1 \int_0^{1-\frac{r_I}{2(1+y)}}
        e^{-\frac{q(x,ry,s)}{2(1-r^2)}+x^2}
        \frac{(-\log r)^{\omega-1}}{(1-r^2)^{\alpha + 3/2}}
        \Pi_\alpha(s)drds.
    \end{equation*}
    Suppose $\omega\in (0,1]$. From Lemma~\ref{func-acot} we have that
    \begin{equation*}
        K_{\omega,2}(x,y) \leq C \int_{-1}^1 \int_0^{1-\frac{r_I}{2(1+y)}}
        e^{-\frac{q(x,ry,s)}{2(1-r^2)}+x^2}
        \frac{\Pi_\alpha(s)}{(1-r^2)^{\alpha + 5/2}}
       dr ds.
    \end{equation*}
for $x,y\in(0,\infty)$ and $x\neq y$. By using Lemma~\ref{lem3.1} we deduce that
\begin{equation*}
        \sup_{y\in I}r_I\int_{(2I)^c} K_{\omega,2}(x,y) d\gamma_\alpha(x)\leq C,
    \end{equation*}
   for certain $C>0$ that does not depend on $I$, provided that $\omega \in (0,1]$. 

   Assume now that $\omega>1$. As in the proof of Lemma~\ref{lem3.1}, and applying again Lemma~\ref{func-acot}, we get, for every $y\in I$, 
\begin{equation*}
\begin{split}
     \int_{(2I)^c} K_{\omega,2}(x,y) d\gamma_\alpha(x)
     & \leq 
     C \int_0^{1-\frac{r_I}{2(1+y)}} \frac{(-\log r)^{\omega-1}}{(1-r)^{1/2}}dr
     \\ & \leq C
     \int_0^{1} \frac{(-\log r)^{\omega-1}}{(1-r)^{1/2}}dr \leq C.
\end{split}
    \end{equation*} 
   Then, for any $\omega >0$, 
   \begin{equation*}
        \sup_{y\in I}r_I\int_{(2I)^c} K_{\omega,2}(x,y) d\gamma_\alpha(x)\leq C,
    \end{equation*}
    where $C>0$ does not depend on $I$. 
\end{proof}

\begin{lem}
\label{lem3.2}
    Let $\alpha>-\frac12$. There exists $C>0$ such that, for every $I\in \mathcal{B}_1$,
    \begin{equation*}
        \sup_{y\in I}r_I\int_{(2I)^c} |K(x,y)| d\gamma_\alpha(x)\leq C,
    \end{equation*}
    where for $x,y\in(0,\infty)$,
    \begin{equation*}
        K(x,y) =  \int_{-1}^1 \int_0^1 \varphi(r)^{1/2}
        \frac{\partial}{\partial r} \left[e^{-\frac{q(x,ry,s)}{1-r^2}+x^2}\right]
        \frac{r dr}{(1-r^2)^{\alpha + 3/2}}
        \Pi_\alpha(s)ds.
    \end{equation*}
\end{lem}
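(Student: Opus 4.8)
The plan is to remove the $r$-derivative from the exponential by an integration by parts in $r$, after which the kernel is dominated pointwise by the kernel of Lemma~\ref{lem3.1}. First I would absorb $\varphi(r)^{1/2}$ into the coefficient and write the inner $r$-integral as $\int_0^1 h(r)\,\partial_r G(r)\,dr$, where
\[
h(r)=\varphi(r)^{1/2}\frac{r}{(1-r^2)^{\alpha+3/2}}=\frac{r}{(1-r^2)^{\alpha+1}(-\log r)^{1/2}},\qquad G(r)=e^{-\frac{q(x,ry,s)}{1-r^2}+x^2}.
\]
Integration by parts gives $\int_0^1 h\,\partial_r G\,dr=[hG]_0^1-\int_0^1 h'(r)G(r)\,dr$.

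The first task is to check that the boundary terms vanish for $x\neq y$. At $r=0$ one has $h(0)=0$ thanks to the factor $r$ (while $G(0)=e^{-q(x,0,s)+x^2}=1$). At $r=1$, since $x\in(2I)^c$ and $y\in I$ force $x\neq y$, estimate~\ref{eq-D1} yields $q(x,y,s)\geq(x-y)^2>0$; hence $q(x,ry,s)$ stays bounded below by a positive constant as $r\to1^-$, so $G(r)$ decays super-polynomially and overpowers the singularity $(1-r^2)^{-(\alpha+1)}$ of $h$. Thus $[hG]_0^1=0$ and
\[
K(x,y)=-\int_{-1}^1\int_0^1 h'(r)\,e^{-\frac{q(x,ry,s)}{1-r^2}+x^2}\,dr\,\Pi_\alpha(s)\,ds .
\]

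Next I would estimate $h'$. Differentiating the simplified expression for $h$ gives
\[
h'(r)=\frac{1}{(1-r^2)^{\alpha+1}(-\log r)^{1/2}}+\frac{2(\alpha+1)r^2}{(1-r^2)^{\alpha+2}(-\log r)^{1/2}}+\frac{1}{2(1-r^2)^{\alpha+1}(-\log r)^{3/2}} ,
\]
and the key point is that the factor $r$ in $h$ cancels the $1/r$ arising from $\partial_r(-\log r)^{-1/2}$, so no nonintegrable singularity appears at $r=0$. Using the boundedness of $\varphi$ from Lemma~\ref{func-acot}, equivalently $(-\log r)^{-1}\leq C(1-r^2)^{-1}$, each of the three terms is bounded by $C(1-r^2)^{-(\alpha+5/2)}$, whence $|h'(r)|\leq C(1-r^2)^{-(\alpha+5/2)}$ on $(0,1)$. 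Since $e^{-q/(1-r^2)}\leq e^{-q/(2(1-r^2))}$, this produces the pointwise domination
\[
|K(x,y)|\leq C\int_{-1}^1\int_0^1\frac{e^{-\frac{q(x,ry,s)}{2(1-r^2)}+x^2}}{(1-r^2)^{\alpha+5/2}}\,dr\,\Pi_\alpha(s)\,ds ,
\]
which is exactly $C$ times the kernel of Lemma~\ref{lem3.1}; that lemma then gives $\sup_{y\in I}r_I\int_{(2I)^c}|K(x,y)|\,d\gamma_\alpha(x)\leq C$ uniformly in $I\in\mathcal B_1$.

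The main obstacle is the justification of the integration by parts, namely the vanishing of the boundary contribution at $r=1$: one must be certain that the Gaussian-type decay of $G$ genuinely dominates the singular coefficient there, which is precisely where the separation $x\neq y$ on $(2I)^c$ is used. I would contrast this with the more computational route that differentiates the exponential directly and uses the identity $rN=2r^2 q(x,ry,s)+2r(1-r^2)y(ry-xs)$, with $N=2r(x^2+y^2)-2xys(1+r^2)$: the first summand reduces immediately to Lemma~\ref{lem3.1} via $q\,e^{-q/(1-r^2)}\le C(1-r^2)e^{-q/(2(1-r^2))}$, but the term carrying $y(ry-xs)$ becomes delicate for $-\frac12<\alpha\le0$, since the natural bound $r^2y^2(1-s^2)\le q$ from~\ref{eq-D9} introduces a factor $(1-s^2)^{-1/2}$ that fails to be integrable against $\Pi_\alpha$. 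The integration-by-parts argument sidesteps exactly this difficulty and works uniformly for all $\alpha>-\frac12$.
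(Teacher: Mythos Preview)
Your proof is correct and follows exactly the paper's approach: integrate by parts in $r$, observe that the boundary terms vanish (at $r=0$ by the factor $r$, at $r=1$ by the exponential decay since $x\in(2I)^c$ and $y\in I$ force $x\neq y$), bound the resulting $|h'(r)|$ by $C(1-r^2)^{-(\alpha+5/2)}$ via Lemma~\ref{func-acot}, and conclude through Lemma~\ref{lem3.1}. Your explicit computation of $h'$ and your closing remark on why direct differentiation runs into trouble for small $\alpha$ are additional commentary, but the argument itself coincides with the paper's.
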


\begin{proof}
    Integrating by parts we get, for $x,y\in (0,\infty)$,
    \begin{equation*}
        \begin{split}
            K(x,y) & = \int_{-1}^1 \left( \left[
                     \frac{\varphi(r)^{1/2} r}{(1-r^2)^{\alpha + 3/2}} 
            e^{-\frac{q(x,ry,s)}{1-r^2}+x^2}
            \right]_{r=0}^{r=1} \right.
            \\ & \quad - 
            \int_0^1 \left. \frac{\partial}{\partial r}
            \left(
            \frac{\varphi(r)^{1/2} r}{(1-r^2)^{\alpha + 3/2}} \right) 
            e^{-\frac{q(x,ry,s)}{1-r^2}+x^2}
            \right) \Pi_\alpha (s) ds
            \\ & = - 
            \int_{-1}^1 \int_0^1 
            \frac{\partial}{\partial r} \left(
            \frac{\varphi(r)^{1/2}r}{(1-r^2)^{\alpha + 3/2}} \right) 
            e^{-\frac{q(x,ry,s)}{1-r^2}+x^2}
            dr \Pi_\alpha (s) ds.
        \end{split}
    \end{equation*}
    By virtue of Lemma~\ref{func-acot}, for $r\in(0,1)$, we get
    \begin{equation*}
        \left| \frac{\partial}{\partial r} \left(
            \frac{\varphi(r)^{1/2}r}{(1-r^2)^{\alpha + 3/2}} \right) 
        \right| \leq \frac{C}{(1-r^2)^{\alpha+5/2}}.
    \end{equation*}
    It follows that, for $x,y\in(0,\infty)$,
    \begin{equation*}
        |K(x,y)| \leq C 
        \int_{-1}^1 \int_0^1 
        \frac{ e^{-\frac{q(x,ry,s)}{1-r^2}+x^2}}{(1-r^2)^{\alpha+5/2}}  dr \Pi_\alpha (s) ds.
    \end{equation*}
    Thus, Lemma~\ref{lem3.1} allows us to finish the proof.
\end{proof}

\begin{lem}
\label{lem3.3}
    Let $\alpha> -\frac12$. There exists $C>0$ such that, for every $I\in \mathcal{B}_1$,
    \[\sup_{y\in I} r_I \int_{(2I)^c} |K(x,y)| d\gamma_\alpha(x)\leq C,\]
    where, for $x,y\in (0,\infty)$, 
    \[K(x,y)=\int_{-1}^1 \int_0^1
    \frac{r(rx-ys)(ry-xs)}{(1-r^2)^{\alpha+7/2}}e^{-\frac{q(x,ry,s)}{1-r^2}+x^2}dr\Pi_\alpha(s)ds.\]
\end{lem}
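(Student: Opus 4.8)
The plan is to dominate $|K(x,y)|$ by a constant multiple of the kernel appearing in Lemma~\ref{lem3.1} and then invoke that lemma. Set $a=x-ry$ and $Q=q(x,ry,s)$. First I would split the exponential as $e^{-Q/(1-r^2)+x^2}=e^{-Q/(2(1-r^2))+x^2}\,e^{-Q/(2(1-r^2))}$, retaining the first factor (which matches the kernel of Lemma~\ref{lem3.1}) and using the second to absorb the polynomial growth of the numerator. Since $v^{\beta}e^{-v}\le C_\beta$ for every $v\ge 0$, applied with $v=Q/(1-r^2)$, it suffices in principle to prove the pointwise bound $\frac{r\,|rx-ys|\,|ry-xs|}{1-r^2}\,e^{-Q/(2(1-r^2))}\le C$; once such control is available the remaining integrand is $\le C$ times the integrand of Lemma~\ref{lem3.1}, and the proof closes as in Lemma~\ref{lem3.2}.

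To handle the numerator I would use Lemma~\ref{several-ineq}. Writing $x=a+ry$ and expanding gives $r(rx-ys)(ry-xs)=-r^2a^2s+r^3ay(1-s)-rasy(r^2-s)+r^2y^2(r^2-s)(1-s)$. Three of these terms reduce directly to Lemma~\ref{lem3.1}. The first is controlled by $a^2\le Q$ (which follows from estimate~\ref{eq-D1}). For the terms carrying a factor $(1-s)$ I would use estimate~\ref{eq-D8}, that is $r^2y^2(1-s)\le Q$, together with $|a|\le\sqrt Q$; this produces a harmless extra factor $\sqrt{1-s}\le C$ and bounds the second summand, while the fourth is controlled by $r^2y^2(1-s)\le Q$ and $|r^2-s|\le 2$. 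In each of these cases one gets $\le C\,\frac{Q}{1-r^2}e^{-Q/(2(1-r^2))}\le C$, as required.

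The delicate term is the mixed contribution coming from the $(1-r^2)$ part of $(r^2-s)=-(1-r^2)+(1-s)$ in the third summand, namely $r\,|a|\,y\,e^{-Q/(2(1-r^2))}$ (its companion $(1-s)$ part being controlled exactly as the second summand). This is not dominated pointwise by the kernel of Lemma~\ref{lem3.1}: letting $s\to 1$ with $x\approx ry$ and $y$ large shows that it need not stay bounded, and the crude estimates $|a|\le\sqrt Q$ and $ry\le\sqrt Q/\sqrt{1-s}$ leave a factor $(1-s)^{-1/2}$ which, even after integrating in $s$, is not absorbed by $\Pi_\alpha(s)$ when $-\tfrac12<\alpha\le 0$. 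I expect this to be the main obstacle. To overcome it I would split the range of $s$: for $s\in(-1,0)$ estimate~\ref{eq-D3} gives $ry\le\sqrt Q$, hence $r|a|y\le Q$ and $r|a|y\,e^{-Q/(2(1-r^2))}\le Q\,e^{-Q/(2(1-r^2))}\le C(1-r^2)$, which is harmless; for $s\in[0,1)$ I would abandon the pointwise comparison and estimate the corresponding integral directly, mimicking the proof of Lemma~\ref{lem3.1}. That is, I would split the $r$-integral at $r_*=1-\frac{r_I}{2(1+y)}$, treat the near range $r\in(r_*,1)$ through \eqref{eq-D2}, \cite[Lemma~2.1]{NSz} and \cite[Lemma~2.2]{NSz}, and the far range $r\in(0,r_*)$ through the $L^1(\mathfrak{m}_\alpha)$-contractivity of the Hankel translation ${}_{\alpha}\tau_{ry}$, keeping careful track of the compensating powers of $(1-s)$ and $(1-r)$. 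Collecting all the estimates and using $r_I\le m(c_I)$ together with $y\le c_I+r_I$, so that $r_Iy\le C$, would finally yield the claimed uniform bound.
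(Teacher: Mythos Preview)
Your reduction of the first, second, and fourth summands (and the $(1-s)$ half of the third) to Lemma~\ref{lem3.1} is correct. The gap is in the ``delicate term'' $r|a|sy/(1-r^2)^{\alpha+5/2}$ for $s\in[0,1)$. Mimicking Lemma~\ref{lem3.1} does not close here: in the far range $r\in(0,r_*)$ the Hankel-translation step, after absorbing $|a|\le\sqrt{Q}$ into the exponential (which gains only one factor $\sqrt{1-r^2}$), leaves
\[
\int_0^{r_*}\frac{ry}{1-r^2}\,dr\;\approx\;\tfrac{y}{2}\log\frac{1+y}{r_I},
\]
and $r_I\cdot y\log\bigl((1+y)/r_I\bigr)\approx \log c_I$ when $c_I$ is large (so $r_I\approx 1/c_I$, $y\approx c_I$), which is unbounded. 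There is no compensating power of $(1-s)$ to rescue this piece --- that is precisely the point of your split $r^2-s=-(1-r^2)+(1-s)$ --- and the near-range tools \eqref{eq-D2}, \cite[Lemmas~2.1--2.2]{NSz} say nothing about $r<r_*$. So the ``keeping careful track'' step is where the argument actually has to do work, and as written it does not.

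The paper avoids this obstacle by a different decomposition. It observes the identity
\[
\frac{2(rx-ys)(ry-xs)}{(1-r^2)^2}
= e^{\frac{q(x,ry,s)}{1-r^2}}\,\partial_r\!\left[e^{-\frac{q(x,ry,s)}{1-r^2}}\right]
+\frac{2r(x^2+y^2)(1-s)}{(1-r^2)^2}
+\frac{2xy(1-s)(r^2-s)}{(1-r^2)^2},
\]
splitting $K=K_1+K_2+K_3$. The piece $K_1$ is a total $r$-derivative and is dispatched by integration by parts --- this is exactly Lemma~\ref{lem3.2}. The remaining pieces $K_2$, $K_3$ each carry an explicit factor $(1-s)$, so \ref{eq-D3}, \ref{eq-D4} and \ref{eq-D5} give $r^2(x^2+y^2)(1-s)\le Cq$ and $xyr(1-s)\le Cq$, reducing both pointwise to the kernel of Lemma~\ref{lem3.1}. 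In other words, the part of the product that resists crude pointwise control --- your delicate term --- is precisely what the $\partial_r$ identity packages into $K_1$ and removes by integrating by parts; that idea is what your proposal is missing.
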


\begin{proof}
First, let us notice that, for every $x,y\in (0,\infty)$, $r\in (0,1)$ and $s\in (-1,1)$,
\begin{align*}
    \frac{\partial}{\partial r}& \left[e^{-\frac{q(x,ry,s)}{1-r^2}}\right]\\
    &=-\frac{(2ry^2-2xys)(1-r^2)+2r(x^2+r^2y^2-2xyrs)}{(1-r^2)^2}
    e^{-\frac{q(x,ry,s)}{1-r^2}}\\
    &=-\frac{2r(x^2+y^2)-2xys(1+r^2)}{(1-r^2)^2}e^{-\frac{q(x,ry,s)}{1-r^2}}
\end{align*}
and 
\[(rx-ys)(ry-xs)=(r^2+s^2)xy-rs(x^2+y^2).\]
Thus, for every $x,y\in (0,\infty)$, $r\in (0,1)$ and $s\in (-1,1)$, we can write
\begin{align*}
    \frac{2(rx-ys)(ry-xs)}{(1-r^2)^2}&=\frac{\partial}{\partial r} \left[e^{-\frac{q(x,ry,s)}{1-r^2}}\right]e^{\frac{q(x,ry,s)}{1-r^2}}+\frac{2r(x^2+y^2)(1-s)}{(1-r^2)^2}\\
    &\quad +\frac{2xy(1-s)(r^2-s)}{(1-r^2)^2}.
\end{align*}
Taking the above expression into account, we define, for every $x,y\in (0,\infty)$,
\begin{align*}
    K_1(x,y)&=\frac12 e^{x^2} \int_{-1}^1 \int_0^1 
    \frac{r}{(1-r^2)^{\alpha+3/2}} \frac{\partial}{\partial r} \left[e^{-\frac{q(x,ry,s)}{1-r^2}}\right]
dr\Pi_\alpha(s)ds,\\
    K_2(x,y)&=e^{x^2}\int_{-1}^1 \int_0^1 
     \frac{r^2(x^2+y^2)(1-s)}{(1-r^2)^{\alpha+7/2}} e^{-\frac{q(x,ry,s)}{1-r^2}}
dr\Pi_\alpha(s)ds,\\
    K_3(x,y)&=e^{x^2}\int_{-1}^1 \int_0^1 
     \frac{xyr(1-s)(r^2-s)}{(1-r^2)^{\alpha+7/2}} e^{-\frac{q(x,ry,s)}{1-r^2}}
dr\Pi_\alpha(s)ds,
\end{align*}
which clearly verify
\[K(x,y)=K_1(x,y)+K_2(x,y)+K_3(x,y), \quad x,y\in (0,\infty).\]

As Lemma~\ref{lem3.2} takes care of $K_1(x,y)$, it only remains to find estimates for $K_2$ and $K_3$. We shall actually see that both of them are bounded by 
\[A(x,y):=\int_0^1 \int_{-1}^1 \frac{e^{-\frac{q(x,ry,s)}{2(1-r^2)}+x^2}}{(1-r^2)^{\alpha+5/2}}\Pi_\alpha(s) dsdr,\quad x,y\in (0,\infty),\]
which verifies the desired inequality as proved in Lemma~\ref{lem3.1}.

Indeed, for $K_2$ we use~\ref{eq-D3},~\ref{eq-D4} and the fact that $\varphi$ is bounded on $(0,1)$ (see Lemma~\ref{func-acot}) to get, for every $x,y\in (0,\infty)$,
\[0\leq K_2(x,y)\leq C\int_{-1}^1 \int_0^1 \frac{q(x,ry,s)}{(1-r^2)^{\alpha+7/2}} e^{-\frac{q(x,ry,s)}{1-r^2}+x^2}
dr\Pi_\alpha(s)ds\leq CA(x,y).\]
For $K_3$, we again apply Lemma~\ref{func-acot}, together with inequality~\ref{eq-D5}, to have
\[|K_3(x,y)|\leq C\int_{-1}^1 \int_0^1 \frac{q(x,ry,s)}{(1-r^2)^{\alpha+7/2}} e^{-\frac{q(x,ry,s)}{1-r^2}+x^2}
dr\Pi_\alpha(s)ds\leq CA(x,y).\]
This finishes the proof.
\end{proof}

\begin{lem}
\label{lem3.4}
  Let $\alpha> -\frac12$ and $\beta\geq 0$. There exists $C>0$ such that, for every $I\in \mathcal{B}_1$,
    \[\sup_{x\in I} r_I \int_{(2I)^c} K_\beta (x,y) d\gamma_\alpha(y)\leq C,\]
    where, for $x,y\in (0,\infty)$,
    \[K_\beta(x,y)=\int_{-1}^1 \int_0^1 \left|\frac{rx-ys}{\sqrt{1-r^2}}\right|^\beta \frac{e^{-\frac{q(rx,y,s)}{1-r^2}+y^2}}{(1-r^2)^{\alpha+5/2}}dr\Pi_\alpha(s)ds.\]
\end{lem}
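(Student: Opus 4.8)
The plan is to absorb the polynomial factor $\bigl|(rx-ys)/\sqrt{1-r^2}\bigr|^\beta$ into the Gaussian and then to reduce the statement to Lemma~\ref{lem3.1} by the symmetry of $q$. First I would use the pointwise bound~\ref{eq-D6}, namely $q(rx,y,s)\ge (rx-ys)^2$, which gives
\[
\left|\frac{rx-ys}{\sqrt{1-r^2}}\right|^\beta \le \left(\frac{q(rx,y,s)}{1-r^2}\right)^{\beta/2}.
\]
Then, invoking the elementary inequality $\lambda^{\beta/2}e^{-\lambda}\le C_\beta\, e^{-\lambda/2}$, valid for every $\lambda\ge 0$ because $t\mapsto t^{\beta/2}e^{-t/2}$ is bounded on $[0,\infty)$, applied with $\lambda=q(rx,y,s)/(1-r^2)$, the product of the polynomial weight and the Gaussian is controlled by
\[
\left|\frac{rx-ys}{\sqrt{1-r^2}}\right|^\beta e^{-\frac{q(rx,y,s)}{1-r^2}} \le C_\beta\, e^{-\frac{q(rx,y,s)}{2(1-r^2)}}.
\]
This yields
\[
K_\beta(x,y)\le C\int_{-1}^1\int_0^1 \frac{e^{-\frac{q(rx,y,s)}{2(1-r^2)}+y^2}}{(1-r^2)^{\alpha+5/2}}\,dr\,\Pi_\alpha(s)\,ds.
\]

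Second, I would observe that the remaining kernel is exactly the kernel appearing in Lemma~\ref{lem3.1}, but with the roles of $x$ and $y$ interchanged. Indeed, since $q$ is symmetric in its first two arguments we have $q(rx,y,s)=q(y,rx,s)$, so after relabelling $x\leftrightarrow y$ the integrand above becomes $e^{-q(x,ry,s)/(2(1-r^2))+x^2}(1-r^2)^{-\alpha-5/2}\Pi_\alpha(s)$, which is precisely the kernel $K$ of Lemma~\ref{lem3.1}; correspondingly, the supremum is now taken over $x\in I$ and the integration is in $d\gamma_\alpha(y)$. Since the argument proving Lemma~\ref{lem3.1} uses only the structure of $q$ (which is symmetric in its first two variables), the identity~\ref{eq-D0}, the two-parameter estimates from~\cite{NSz}, and the contractivity of the Hankel translation in $L^p((0,\infty),\mathfrak{m}_\alpha)$, the same reasoning applies verbatim after the exchange $x\leftrightarrow y$. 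Consequently Lemma~\ref{lem3.1} gives
\[
\sup_{x\in I} r_I \int_{(2I)^c} \int_{-1}^1\int_0^1 \frac{e^{-\frac{q(rx,y,s)}{2(1-r^2)}+y^2}}{(1-r^2)^{\alpha+5/2}}\,dr\,\Pi_\alpha(s)\,ds\,d\gamma_\alpha(y)\le C,
\]
with $C$ independent of $I$, and combining this with the first step finishes the proof.

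The only point requiring care is the verification that the symmetry $q(rx,y,s)=q(y,rx,s)$, together with~\ref{eq-D0}, makes Lemma~\ref{lem3.1} directly applicable after swapping the two variables; once this reduction is justified, the genuinely new ingredient is merely the absorption of the polynomial weight into the Gaussian, which is entirely elementary. I do not expect any real obstacle here, since the extra factor of $2$ in the exponent of Lemma~\ref{lem3.1} (compared with $K_\beta$) is exactly what the absorption step supplies.
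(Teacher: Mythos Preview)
Your proposal is correct and follows essentially the same approach as the paper: absorb the polynomial factor via~\ref{eq-D6} and the elementary bound $\lambda^{\beta/2}e^{-\lambda}\le C_\beta e^{-\lambda/2}$, then use the symmetry $q(rx,y,s)=q(y,rx,s)$ to reduce to Lemma~\ref{lem3.1} with the roles of $x$ and $y$ interchanged. The paper's proof is slightly more terse, but the logic is identical.
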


\begin{proof}According to~\ref{eq-D6}, $|rx-ys|^\beta \leq (q(rx,y,s))^{\beta/2}$ for any $x,y\in (0,\infty)$, $r\in(0,1)$ and $s\in (-1,1).$ Hence,
\begin{align*}
  K_\beta(x,y)&\leq C\int_{-1}^1 \int_0^1 \left(\frac{q(rx,y,s)}{1-r^2}\right)^{\beta/2} \frac{e^{-\frac{q(rx,y,s)}{1-r^2}+y^2}}{(1-r^2)^{\alpha+5/2}}dr\Pi_\alpha(s)ds\\
  &\leq C\int_{-1}^1 \int_0^1 \frac{e^{-\frac{q(rx,y,s)}{2(1-r^2)}+y^2}}{(1-r^2)^{\alpha+5/2}}dr\Pi_\alpha(s)ds,\quad x,y\in (0,\infty).
\end{align*}
Since $q(rx,y,s)=q(y,rx,s)$, by changing the roles of $x$ and $y$ in Lemma~\ref{lem3.1}, the proof in finished.
\end{proof}

\begin{lem}
\label{lem3.5}
  Let $\alpha> -\frac12$. There exists $C>0$ such that, for every $I\in \mathcal{B}_1$,
    \[\sup_{x\in I} r_I \int_{(2I)^c} K(x,y) d\gamma_\alpha(y)\leq C,\]
    where, for $x,y\in (0,\infty)$,
    \[K(x,y)=\int_{-1}^1 \int_0^1 \frac{xyr}{(1-r^2)^{\alpha+7/2}} e^{-\frac{q(rx,y,s)}{1-r^2}+y^2}dr\Pi_{\alpha+1}(s)ds.\]
\end{lem}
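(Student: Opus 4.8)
The plan is to reduce this estimate to Lemma~\ref{lem3.1} by absorbing the prefactor $xy$ and the surplus factors into the exponential, in exactly the same spirit as the proof of Lemma~\ref{lem3.4}. The first step is to rewrite the heavier weight $\Pi_{\alpha+1}$ in terms of $\Pi_\alpha$. Since $\Pi_{\alpha+1}(s)=\frac{\Gamma(\alpha+2)}{\Gamma(\alpha+3/2)\sqrt\pi}(1-s^2)^{\alpha+1/2}$ and $\Pi_\alpha(s)=\frac{\Gamma(\alpha+1)}{\Gamma(\alpha+1/2)\sqrt\pi}(1-s^2)^{\alpha-1/2}$, one has $\Pi_{\alpha+1}(s)=c_\alpha(1-s^2)\Pi_\alpha(s)$ for a positive constant $c_\alpha$. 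Hence, up to the constant $c_\alpha$, the integrand defining $K(x,y)$ becomes
\[
\frac{xyr(1-s^2)}{(1-r^2)^{\alpha+7/2}}\,e^{-\frac{q(rx,y,s)}{1-r^2}+y^2}\,\Pi_\alpha(s),
\]
so the task is to absorb the factor $\frac{xyr(1-s^2)}{1-r^2}$, which carries exactly one extra power of $(1-r^2)^{-1}$ relative to the kernel of Lemma~\ref{lem3.1}.

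The key algebraic step is the pointwise inequality $xyr(1-s^2)\le q(rx,y,s)$. Indeed, since $q(rx,y,s)=q(y,rx,s)$, property~\ref{eq-D5} applied with the roles of the first two arguments interchanged gives $q(rx,y,s)\ge 2xyr(1-s)$, and therefore
\[
xyr(1-s^2)=xyr(1-s)(1+s)\le \tfrac{1+s}{2}\,q(rx,y,s)\le q(rx,y,s),
\]
because $0\le\frac{1+s}{2}\le 1$ for $s\in(-1,1)$. Using this bound to peel off the extra $(1-r^2)^{-1}$, and then the elementary estimate $ue^{-u}\le Ce^{-u/2}$ with $u=\frac{q(rx,y,s)}{1-r^2}\ge 0$, the integrand is controlled by
\[
\frac{C}{(1-r^2)^{\alpha+5/2}}\,e^{-\frac{q(rx,y,s)}{2(1-r^2)}+y^2}\,\Pi_\alpha(s),
\]
so that $K(x,y)\le C\int_{-1}^1\int_0^1 (1-r^2)^{-(\alpha+5/2)}e^{-\frac{q(rx,y,s)}{2(1-r^2)}+y^2}\,dr\,\Pi_\alpha(s)\,ds$.

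This last expression is precisely the kernel appearing in Lemma~\ref{lem3.1}, except with $q(rx,y,s)$ in place of $q(x,ry,s)$ and $e^{y^2}$ in place of $e^{x^2}$. Since $q(rx,y,s)=q(y,rx,s)$, changing the roles of $x$ and $y$ puts the integration on the first, Gaussian-weighted variable and the supremum on the variable carrying the factor $r$, so the desired conclusion $\sup_{x\in I}r_I\int_{(2I)^c}K(x,y)\,d\gamma_\alpha(y)\le C$ follows directly from Lemma~\ref{lem3.1}. The only delicate point, and the main obstacle, is the bookkeeping in the first two paragraphs: one must check that the prefactor $xy$, the extra $(1-r^2)^{-1}$, and the heavier weight $\Pi_{\alpha+1}$ together are absorbed by a single application of $ue^{-u}\le Ce^{-u/2}$. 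Once the inequality $xyr(1-s^2)\le q(rx,y,s)$ is established the powers of $(1-r^2)$ match exactly and no further estimation is required.
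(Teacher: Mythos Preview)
Your argument is correct and matches the paper's proof essentially line for line: the paper also bounds $\Pi_{\alpha+1}(s)\le C\Pi_\alpha(s)(1-s)$, invokes $q(rx,y,s)=q(y,rx,s)\ge 2xyr(1-s)$ from~\ref{eq-D5}, absorbs the resulting factor $q/(1-r^2)$ into the exponential, and then swaps the roles of $x$ and $y$ to apply Lemma~\ref{lem3.1}. The only cosmetic difference is that you write the exact identity $\Pi_{\alpha+1}(s)=c_\alpha(1-s^2)\Pi_\alpha(s)$ and then discard the factor $(1+s)/2\le 1$, whereas the paper uses the inequality $\Pi_{\alpha+1}(s)\le C_\alpha\,2(1-s)\Pi_\alpha(s)$ directly.
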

\begin{proof}
From~\ref{eq-D1} we know that $q(rx,y,s)=q(y,rx,s)\geq 2xyr(1-s)$ for $x,y\in (0,\infty)$, $r\in(0,1)$ and $s\in (-1,1).$ Since $\Pi_{\alpha+1}(s)\leq C_\alpha \Pi_\alpha(s) 2(1-s)$ for each $s\in (-1,1)$, we have 
\[K(x,y)\leq C \int_{-1}^1 \int_0^1 \frac{e^{-\frac{q(rx,y,s)}{1-r^2}+y^2}}{(1-r^2)^{\alpha+5/2}} dr\Pi_\alpha(s)ds,\quad x,y\in (0,\infty).\]
As in the proof of the previous lemma, the desired inequality for $K$ follows from Lemma~\ref{lem3.1}.
\end{proof}

\begin{lem}
\label{lem3.6}
    Let $\alpha>-\frac12$. There exists $C>0$ such that, for every $I\in\mathcal{B}_1$,
    \begin{equation*}
    \sup_{y\in I}r_I \int_{(2I)^c} |K_j(x,y)| d\gamma_\alpha(x)\leq C, \quad j=1,2,3,4,
    \end{equation*}
    where, for $x,y\in(0,\infty)$,
    \begin{equation*}
        K_1(x,y) = \int_{-1}^1 \int_0^1
        \frac{e^{-\frac{q(x,ry,s)}{1-r^2}+x^2}}{(1-r^2)^{\alpha+5/2}} 
        xrdr \Pi_{\alpha+1}(s)ds,
    \end{equation*}
    \begin{equation*}
        K_2(x,y) = \int_{-1}^1 \int_0^1
        \frac{e^{-\frac{q(x,ry,s)}{1-r^2}+x^2}}{(1-r^2)^{\alpha+7/2}} 
        xr(x-ysr)dr \Pi_{\alpha+1}(s)ds,
    \end{equation*}
    \begin{equation*}
        K_3(x,y) = \int_{-1}^1 \int_0^1
        \frac{e^{-\frac{q(x,ry,s)}{1-r^2}+x^2}}{(1-r^2)^{\alpha+9/2}} 
        xyr^2(x-ysr)(ry-xs)dr \Pi_{\alpha+1}(s)ds,
    \end{equation*}
    and 
     \begin{equation*}
        K_4(x,y) = \int_{-1}^1 \int_0^1
        \frac{e^{-\frac{q(x,ry,s)}{1-r^2}+x^2}}{(1-r^2)^{\alpha+5/2}} 
        yr(ry-xs)dr \Pi_{\alpha+1}(s)ds.
    \end{equation*}
\end{lem}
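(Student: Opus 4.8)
The plan is to reduce each of the four kernels to the model kernel
\[
A(x,y)=\int_0^1\int_{-1}^1 \frac{e^{-\frac{q(x,ry,s)}{2(1-r^2)}+x^2}}{(1-r^2)^{\alpha+5/2}}\Pi_\alpha(s)\,ds\,dr,
\]
which is precisely the quantity already handled in Lemma~\ref{lem3.1}, where it is shown that $\sup_{y\in I}r_I\int_{(2I)^c}A(x,y)\,d\gamma_\alpha(x)\le C$. Thus, once each $|K_j|$ is bounded pointwise by $CA(x,y)$, Lemma~\ref{lem3.1} closes the argument immediately. The three ingredients I would combine are: the pointwise lower bounds for $q(x,ry,s)$ collected in Lemma~\ref{several-ineq}; the relation $\Pi_{\alpha+1}(s)\le C_\alpha(1-s)\Pi_\alpha(s)$ used in the proof of Lemma~\ref{lem3.5} (which follows from $(1-s^2)=(1-s)(1+s)\le 2(1-s)$); and the elementary absorption inequality $t^p e^{-t}\le C_p e^{-t/2}$ for $t\ge 0$, applied with $t=q(x,ry,s)/(1-r^2)$.

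The common mechanism is to dominate each linear or bilinear factor in the numerator by a power of $\sqrt{q(x,ry,s)}$, at the cost of a factor $(1-s)^{-1/2}$, and to let the $(1-s)$ gained from $\Pi_{\alpha+1}$ compensate these singularities. Concretely, estimates~\ref{eq-D7} and~\ref{eq-D8} give $xr\le \sqrt{q}/\sqrt{1-s}$ and $yr\le \sqrt{q}/\sqrt{1-s}$; estimate~\ref{eq-D9} gives $|x-rys|\le \sqrt{q}$; and, since $q(x,ry,s)=q(ry,x,s)$, estimate~\ref{eq-D6} applied with the roles of $x$ and $y$ interchanged gives $|ry-xs|\le \sqrt{q}$. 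For $K_1$ only the factor $xr$ appears, so $xr\,\Pi_{\alpha+1}(s)\le C\sqrt{q}\,\sqrt{1-s}\,\Pi_\alpha(s)\le C\sqrt{q}\,\Pi_\alpha(s)$; writing $\sqrt{q}=\sqrt{1-r^2}\,(q/(1-r^2))^{1/2}$ and absorbing the half-power into the exponential yields $K_1\le CA$. The kernels $K_2$ and $K_4$ each carry two factors, exactly one of which is singular in $(1-s)$, so the same computation produces a single power $q^{1}$ with a harmless $\sqrt{1-s}$ to spare, giving $K_2,K_4\le CA$ after using $(1-r^2)^{-(\alpha+3/2)}\le(1-r^2)^{-(\alpha+5/2)}$ where needed.

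The case requiring the most care is $K_3$, which contains the quadruple product $xyr^2(x-ysr)(ry-xs)$: here the two factors $xr$ and $yr$ each contribute a $(1-s)^{-1/2}$, so the product is bounded by $q^2/(1-s)$, and the single power of $(1-s)$ supplied by $\Pi_{\alpha+1}$ is exactly enough to cancel it, leaving $q^2\,\Pi_\alpha(s)$. Rewriting $q^2/(1-r^2)^{\alpha+9/2}=(q/(1-r^2))^2/(1-r^2)^{\alpha+5/2}$ and applying the absorption inequality with $p=2$ gives $K_3\le CA$. I expect the bookkeeping of these $(1-s)$ singularities to be the only genuine obstacle: for each $j$ one must verify that at most two numerator factors are simultaneously singular in $(1-s)$, so that the single power of $(1-s)$ coming from the ratio $\Pi_{\alpha+1}/\Pi_\alpha$ suffices to absorb them. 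Once this balance is checked, every $K_j$ collapses to $CA(x,y)$ and Lemma~\ref{lem3.1} finishes the proof.
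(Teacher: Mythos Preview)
Your proposal is correct and follows essentially the same route as the paper: both reduce each $|K_j|$ pointwise to the model kernel of Lemma~\ref{lem3.1} by invoking the same lower bounds \ref{eq-D7}, \ref{eq-D8}, \ref{eq-D9}, \ref{eq-D6} from Lemma~\ref{several-ineq}, the inequality $\Pi_{\alpha+1}(s)\le C(1-s)\Pi_\alpha(s)$, and absorption of powers of $q/(1-r^2)$ into the exponential. The paper's proof is merely terser---it records for each $j$ which of the estimates \ref{eq-D7}--\ref{eq-D6} are used and cites Lemma~\ref{lem3.1}---whereas you spell out the $(1-s)$ bookkeeping explicitly, but the argument is the same.
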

\begin{proof}
     By \ref{eq-D7}, the property for $j=1$ can be deduced from Lemma~\ref{lem3.1}. 
     
     Using \ref{eq-D8} and~\ref{eq-D6}, the property for $j=4$ follows from Lemma~\ref{lem3.1}.
    
    On the other hand, by~\ref{eq-D7} and~\ref{eq-D9} we deduce that the property holds for $j=2$ after using Lemma~\ref{lem3.1}.
    
    Finally, the property for $j=3$ follows from inequalities~\ref{eq-D7},~\ref{eq-D8},~\ref{eq-D9},~\ref{eq-D6} and Lemma~\ref{lem3.1}. 
\end{proof}

\section{Proof of Theorem~\ref{endpoints}}\label{mainthm}

In this section we prove Theorem~\ref{endpoints}. We will do so by considering each operator individually and applying the auxiliary results obtained in the previous section together with the criterion given in Theorem~\ref{criterion}.

According to (\ref{subor}) we have that 
\[P_t^\alpha (f)(x)=\int_0^\infty P_t^\alpha(x,y) f(y)d\gamma_\alpha(y), \quad x\in (0,\infty),\]
where after the change of variables $u=-2\log r$, we get
\[P_t^\alpha(x,y)=\frac{1}{\sqrt{\pi}}\int_0^1 W_{-2\log r}^\alpha(x,y) t e^{t^2/(8\log r)} \frac{dr}{r(-2\log r)^{3/2}}, \quad t,x,y\in (0,\infty),\]
recalling that
\[W_{-2\log r}^\alpha(x,y)=\int_{-1}^1 \frac{e^{-\frac{q(rx,y,s)}{1-r^2}+y^2}}{(1-r^2)^{\alpha+1}}\Pi_\alpha(s) ds, \quad x,y\in (0,\infty), r\in (0,1).\]

\subsection{Proof of Theorem~\ref{endpoints} for maximal operators}\label{sec-maximal}

We will show that the operators $P_{*,k}^\alpha$ verify the properties in \ref{Linf-BMO-op} and in \ref{H1-L1-op} for any $k\in \mathbb{N}$.

Let $k\in \mathbb{N}$. According to \cite[Remark 4.3]{LeMX2} the operator $P_{*,k}^\alpha$ is bounded on $L^p((0,\infty),\gamma_\alpha)$, for every $1<p<\infty$. By using \cite[Theorem~1.1]{BdL} it follows that $P_{*,k}^\alpha$ is also bounded from $L^1((0,\infty),\gamma_\alpha)$ into $L^{1,\infty}((0,\infty),\gamma_\alpha)$.

According to Lemmas~\ref{cotasderivadasPoisson} and 
 \ref{Lema4BCCFR}, notice that
\begin{align*}\label{lem3.3-infty}
    \nonumber\left|t^{k} \partial_y^\ell \partial_x^{1-\ell}\partial_t^k P^\alpha_t (x,y)\right|&\leq C\int_0^1 \left|\partial_y^\ell \partial_x^{1-\ell} W_{-2\log r}^\alpha(x,y)\right| \frac{t^{k} e^{t^2/(16\log r)}}{(-\log r)^{\frac k2+1}} \frac{dr}{r}\\
    &\nonumber\leq C\int_0^1 \left|\partial_y^\ell \partial_x^{1-\ell} W_{-2\log r}^\alpha(x,y)\right|\frac{e^{t^2/(32\log r)}}{(-\log r)^{\frac 12}}  \frac{dr}{r}\\
    &\leq C\int_0^1 \left|\partial_y^\ell \partial_x^{1-\ell} W_{-2\log r}^\alpha(x,y)\right|(-\log r)^{-\frac 12}  \frac{dr}{r},
\end{align*}
for any $\ell=0,1$, and $x,y,t\in (0,\infty)$.

Set now $\ell=0$. By virtue of Lemma~\ref{lem3.7} and the boundedness of $\varphi$ given in Lemma~\ref{func-acot}, we get for every $x,y,t\in (0,\infty)$
\begin{align*}
    &\left|t^{k}  \partial_x\partial_t^k P^\alpha_t (x,y)\right|\\
    &\quad \leq C\int_0^1 \frac{(-\log r)^{-\frac 12}}{(1-r^2)^{\alpha+2}} \int_{-1}^1 \left(1+\left|\frac{rx-ys}{\sqrt{1-r^2}}\right|\right)e^{-\frac{q(rx,y,s)}{1-r^2}+y^2} \Pi_\alpha(s)ds dr\\
    &\quad \leq C\int_0^1 \frac{1}{(1-r^2)^{\alpha+5/2}} \int_{-1}^1 \left(1+\left|\frac{rx-ys}{\sqrt{1-r^2}}\right|\right) e^{-\frac{q(rx,y,s)}{1-r^2}+y^2}\Pi_\alpha(s) dsdr.
\end{align*}
By combining Lemmas~\ref{lem3.1} and~\ref{lem3.4}, it follows that
\begin{equation*}
    \sup_{I\in\mathcal{B}_1} r_I \sup_{x\in I} \int_{(2I)^c} \|\partial_x  t^{k}\partial_t^k P^\alpha_t (x,y)\|_{L^\infty\left((0,\infty),dt\right)} d\gamma_\alpha(y)<\infty,
\end{equation*}
meaning that $P_{*,k}^\alpha$ is bounded from $L^{\infty}((0,\infty),\gamma_\alpha)$ to $\BMO((0,\infty),\gamma_\alpha)$ and Theorem~\ref{endpoints}~\ref{Linf-BMO-op} holds.

Consider now $\ell=1$. According to \ref{eq-D0} and proceeding as in Lemma~\ref{lem3.7}, for every $x,y,t\in (0,\infty)$,
\begin{equation}\label{cotasderivadasPoisson_y}
|\partial_y W_{-2\log r}^\alpha(x,y)|\leq C\frac{r}{(1-r^2)^{\alpha+\frac32}} \int_{-1}^1 \frac{|ry-xs|}{\sqrt{1-r^2}} e^{-\frac{q(rx,y,s)}{1-r^2}+y^2} \Pi_\alpha(s)ds,
\end{equation}
and thus by using again Lemma~\ref{cotasderivadasPoisson}, we get
\[\left|t^{k} \partial_y\partial_t^k P^\alpha_t (x,y)\right|\leq C\int_{-1}^1\int_0^1 \frac{|ry-xs|}{\sqrt{1-r^2}}  \frac{e^{-\frac{q(rx,y,s)}{1-r^2}+y^2}}{(1-r^2)^{\alpha+5/2}} dr \Pi_\alpha(s)ds,\quad x,y,t\in (0,\infty).\]
By applying Lemma~\ref{lem3.4} with $\beta=1$, we deduce that
\begin{equation}\label{cond-H1-L1-Poisson}
    \sup_{I\in\mathcal{B}_1} r_I \sup_{y\in I} \int_{(2I)^c} \|\partial_y t^{n+k}\partial_x^n\partial_t^k P^\alpha_t (x,y)\|_{L^\infty\left((0,\infty),dt\right)} d\gamma_\alpha(x)<\infty.
\end{equation}
Therefore, \ref{H1-L1-op} holds as claimed.

\subsection{Proof of Theorem~\ref{endpoints} for Littlewood-Paley functions}\label{sec-LP}

We recall that, for $n,k\in \mathbb{N}$ with $n+k\geq 1$,
\begin{equation*}
    g_{n,k}^\alpha (f)(x) = \left( \int_0^\infty \left| t^{n+k}
    \partial_x^n \partial_t^k P^\alpha_t (f) (x)\right|^2 \frac{dt}{t} 
    \right)^{1/2},
    \hspace{2pt} 
    x\in(0,\infty),
\end{equation*}
which is bounded on $L^p((0,\infty),\gamma_\alpha)$, for every $1<p<\infty$ and $n,k\in \mathbb{N}$ with $n+k\ge 1$ (\cite[Theorem 1.1, (c)]{BDQS}) and from $L^1((0,\infty),\gamma_\alpha)$ into $L^{1,\infty}((0,\infty),\gamma_\alpha)$, for every  $k\in \mathbb{N}$, $k\ge 1$, when $n=0$ (\cite[Theorem~1.2]{BdL}). 

We shall first see that, for any $n,k\in \mathbb N$, $n+k\geq 1$, $\ell=0,1$, and $x,y\in (0,\infty)$, 
\begin{align}\label{cotanorma2xy}
    &\left\|t^{n+k}\partial_y^\ell \partial_x^{n+1-\ell}\partial_t^k P^\alpha_t (x,y)\right\|_{L^2\left((0,\infty),\frac{dt}{t}\right)}\nonumber\\
    &\qquad \leq C\int_0^1 |\partial_y^\ell \partial_x^{n+1-\ell} W_{-2\log r}^\alpha(x,y)| (-\log r)^{\frac n2-1} \frac{dr}{r}.
\end{align}
By Lemma~\ref{cotasderivadasPoisson}, Minkowski's inequality, and taking $v=t^2/(-16\log r)$, we get
\begin{align*}
    &\left\|t^{n+k}\partial_y^\ell \partial_x^{n+1-\ell}\partial_t^k P^\alpha_t (x,y)\right\|_{L^2\left((0,\infty),\frac{dt}{t}\right)}\\
    &\qquad\leq C \int_0^1 \frac{\left|\partial_y^\ell \partial_x^{n+1-\ell} W_{-2\log r}^\alpha(x,y)\right|}{r(-\log r)^{\frac k2+1}}\left(\int_0^\infty t^{2(n+k)}e^{t^2/(16\log r)}\frac{dt}{t}\right)^{1/2} dr\\
    &\qquad \leq C \int_0^1 \frac{\left|\partial_y^\ell \partial_x^{n+1-\ell} W_{-2\log r}^\alpha(x,y)\right|}{r(-\log r)^{\frac k2+1}}(-\log r)^{\frac{n+k}{2}}\left(\int_0^\infty v^{n+k}e^{-v}\frac{dv}{v}\right)^{1/2} dr\\
    &\qquad =C\int_0^1 \left|\partial_y^\ell \partial_x^{n+1-\ell} W_{-2\log r}^\alpha(x,y)\right|(-\log r)^{\frac n2-1}\frac{dr}{r}
\end{align*}
whenever $n+k>0$. Thus, \eqref{cotanorma2xy} holds.

In order to prove that $g_{n,k}^\alpha$ is bounded from $L^\infty((0,\infty), \gamma_\alpha)$ to $\BMO((0,\infty), \gamma_\alpha)$, we will see that
\begin{equation}\label{cond-Linf-BMO-g}
    \sup_{I\in\mathcal{B}_1} r_I \sup_{x\in I} \int_{(2I)^c} \|\partial_x  t^{n+k}\partial_x^n\partial_t^k P^\alpha_t (x,y)\|_{L^2\left((0,\infty),\frac{dt}{t}\right)} d\gamma_\alpha(y)<\infty.
\end{equation}

From \eqref{cotanorma2xy} with $\ell=0$ we know that
\[\|\partial_x  t^{n+k}\partial_x^n\partial_t^k P^\alpha_t (x,y)\|_{L^2\left((0,\infty),\frac{dt}{t}\right)}\leq C\int_0^1 \left|\partial_x^{n+1} W_{-2\log r}^\alpha(x,y)\right|(-\log r)^{\frac n2-1}\frac{dr}{r}.\]
According to Lemma~\ref{lem3.7} and the boundedness of $\xi_n$ given in Lemma~\ref{func-acot}, we get
\begin{align*}
    &\|\partial_x  t^{n+k}\partial_x^n\partial_t^k P^\alpha_t (x,y)\|_{L^2\left((0,\infty),\frac{dt}{t}\right)}\\
&\leq C\int_0^1 \frac{r^n(-\log r)^{\frac n2-1}}{(1-r^2)^{\alpha+\frac n2+\frac32}} \int_{-1}^1 \left(1+\left|\frac{rx-ys}{\sqrt{1-r^2}}\right|^{n+1}\right) e^{-\frac{q(rx,y,s)}{1-r^2}+y^2}\Pi_\alpha(s) dsdr\\
&\leq C\int_0^1 \frac{1}{(1-r^2)^{\alpha+5/2}} \int_{-1}^1 \left(1+\left|\frac{rx-ys}{\sqrt{1-r^2}}\right|^{n+1}\right) e^{-\frac{q(rx,y,s)}{1-r^2}+y^2}\Pi_\alpha(s) dsdr.
\end{align*}
By combining Lemmas~\ref{lem3.1} and~\ref{lem3.4}, \eqref{cond-Linf-BMO-g} holds.

On the other hand, the endpoint estimate from $H^1((0,\infty), \gamma_\alpha)$ to $L^1((0,\infty), \gamma_\alpha)$ for $g_{0,k}^\alpha$, follows by taking into account the symmetry of the Poisson kernel and~\eqref{cond-Linf-BMO-g}.

\subsection{Proof of Theorem~\ref{endpoints} for Riesz transforms}\label{sec-Riesz}

According to  \cite[Theorem~1.1]{BFRS}, \cite[Theorem~1.1]{Sa4}, and \cite[Theorem~13]{No}, for every $n\in \mathbb{N}, n\ge 1$, $R_\alpha^n$ can be extended
from $L^2((0,\infty),\gamma_\alpha)\cap L^p((0,\infty),\gamma_\alpha)$ to $L^p((0,\infty),\gamma_\alpha)$ as a
bounded operator on $L^p((0,\infty),\gamma_\alpha)$, for every $1<p<\infty$.
Furthermore, $R_\alpha^1$ can be extended from $L^2((0,\infty),\gamma_\alpha)\cap L^1((0,\infty),\gamma_\alpha)$ to $L^1((0,\infty),\gamma_\alpha)$
as a bounded operator from $L^1((0,\infty),\gamma_\alpha)$ into $L^{1,\infty}((0,\infty),\gamma_\alpha)$ (see \cite{FSS2}).

Let $n\in \mathbb N$, $n\geq 1$. The kernel of the Riesz transform of order $n$ with respect to the measure $\gamma_\alpha$ is given by
\[K_\alpha^n(x,y)=\int_{-1}^1\int_0^1 r^{n-1}\left(\frac{-\log r}{1-r^2}\right)^{\frac n2-1} H_n\left(\frac{rx-ys}{\sqrt{1-r^2}}\right) \frac{e^{-\frac{q(rx,y,s)}{1-r^2}+y^2}}{(1-r^2)^{\alpha+2}}dr \Pi_\alpha(s) ds,\]
for $x,y\in (0,\infty).$ Recall that $H_n$ denotes the Hermite polynomial of order $n$ given in \eqref{Hpoly}. 

Then, from \cite[p.~62]{Leb}, we have that
\begin{align*}
    \partial_x K_\alpha^n(x,y)&=-\int_{-1}^1\int_0^1 r^{n}\left(\frac{-\log r}{1-r^2}\right)^{\frac n2-1} H_{n+1}\left(\frac{rx-ys}{\sqrt{1-r^2}}\right)\\
    &\quad \times\frac{e^{-\frac{q(rx,y,s)}{1-r^2}+y^2}}{(1-r^2)^{\alpha+5/2}}dr \Pi_\alpha(s) ds,
\end{align*}
and, therefore, by using the boundedness of $\xi_n$ given in Lemma~\ref{func-acot} we get
\[\left|\partial_x K_\alpha^n(x,y)\right|\leq C\int_{-1}^1\int_0^1 \frac{e^{-\frac{q(rx,y,s)}{2(1-r^2)}+y^2}}{(1-r^2)^{\alpha+5/2}}dr \Pi_\alpha(s) ds.\]
According to Lemma~\ref{lem3.1} and \ref{eq-D0},
\begin{equation*}
    \sup_{I\in\mathcal{B}_1} r_I \sup_{x\in I} \int_{(2I)^c} \left|\partial_x K_\alpha^n(x,y)\right| d\gamma_\alpha(y)<\infty
\end{equation*}
so $R_\alpha^n$ is bounded from $L^\infty((0,\infty),\gamma_\alpha)$ to $\BMO((0,\infty),\gamma_\alpha)$ by virtue of Theorem~\ref{criterion}, for any $n\in \mathbb N$ with $n\geq 1$.

On the other hand, for $n=1$, by \ref{eq-D0} we obtain
\begin{align*}
    \partial_y K_\alpha^1(x,y)&=e^{x^2}\int_{-1}^1\int_0^1 \frac{1}{(1-r^2)^{\alpha+2}}\left(\frac{-\log r}{1-r^2}\right)^{-\frac12}\\
    &\quad \times \partial_y\left[2\left(\frac{rx-ys}{\sqrt{1-r^2}}\right)e^{-\frac{q(x,ry,s)}{1-r^2}}\right] dr \Pi_\alpha(s) ds\\
    &=-2e^{x^2}\int_{-1}^1\int_0^1 \frac{\varphi(r)^{1/2}}{(1-r^2)^{\alpha+2}}e^{-\frac{q(x,ry,s)}{1-r^2}}\\
    &\quad \times \left(\frac{s}{\sqrt{1-r^2}}+\frac{2r(rx-ys)(ry-xs)}{(1-r^2)^{3/2}}\right) dr \Pi_\alpha(s) ds.
\end{align*}
Hence, by Lemma~\ref{func-acot} we can estimate
\begin{align*}
    \left|\partial_y K_\alpha^1(x,y)\right|&\leq C\int_{-1}^1\int_0^1 \frac{e^{-\frac{q(x,ry,s)}{1-r^2}+x^2}}{(1-r^2)^{\alpha+5/2}}dr \Pi_\alpha(s) ds\\
&\quad +C\int_{-1}^1\int_0^1 \frac{r|rx-ys||ry-xs|}{(1-r^2)^{\alpha+7/2}}e^{-\frac{q(x,ry,s)}{1-r^2}+x^2} dr \Pi_\alpha(s) ds.
\end{align*}
By combining Lemmas~\ref{lem3.1} and \ref{lem3.3}, we get
\begin{equation*}
    \sup_{I\in\mathcal{B}_1} r_I \sup_{y\in I} \int_{(2I)^c} \left|\partial_y K_\alpha^n(x,y)\right| d\gamma_\alpha(x)<\infty
\end{equation*}
so \ref{H1-L1-op} holds for $R_\alpha^1$.

\subsection{Proof of Theorem~\ref{endpoints} for multipliers of Laplace transform type}\label{sec-mult}
Suppose that $M$ is of Laplace transform type given by
\begin{equation*}
    M(x) = x \int_{0}^\infty e^{-xy} \phi(y) dy, \quad x\in(0,\infty),
\end{equation*}
where $\phi\in L^{\infty}(0,\infty)$. We consider the function
\begin{equation*}
    K_\phi^\alpha(x,y) = \int_0^\infty \phi(t) \left(
    -{\partial_t}\right) P^\alpha_t(x,y)dt,
    \quad x,y\in(0,\infty).
\end{equation*}
Note here that $K^\alpha_\phi(x,y) = K^\alpha_\phi(y,x)$ for $x,y\in(0,\infty)$. 

We have that, for $x,y\in(0,\infty)$, $x\neq y$,
\begin{equation*}
    \partial_x K^\alpha_\phi (x,y) = \int_0^\infty \frac{\phi(t)}{\sqrt{2\pi}} \int_0^1 \partial_t \left(te^{\frac{t^2}{8\log r}}\right)
    \partial_x W^\alpha_{-2\log r}(x,y) \frac{dr}{r(-\log r)^{3/2}}
    dt.
\end{equation*}
By using Lemma~\ref{cotasderivadasPoisson} with $\ell=n=0$ and $k=1$, and computing the inner integral, we get
\begin{equation*}
    \begin{split}
        |\partial_x K^\alpha_\phi(x,y)| & \leq 
        C  \int_0^1 \int_0^\infty e^{\frac{t^2}{16\log r}} dt \frac{1}{r (-\log r)^{3/2}}
    |\partial_x W^\alpha_{-2\log r}(x,y) | dr
        \\ & \leq C
        \int_0^1 |\partial_x W^\alpha_{-2\log r}(x,y) | 
        \frac{dr}{r (-\log r)},
    \end{split}
\end{equation*}
for every $x,y\in(0,\infty)$, $x\neq y.$

Continuing as in Section~\ref{sec-maximal} we can find $C>0$ such that for every  $I\in\mathcal{B}_1$,
\begin{equation*}
    \sup_{x\in I} r_I \int_{(2I)^c} |\partial_x K^\alpha_\phi(x,y)| d\gamma_\alpha(y) \leq C.
\end{equation*}

Since $T_M$ is bounded from $L^2((0,\infty),\gamma_\alpha)$ into itself, we can obtain the conclusion of Theorem~\ref{endpoints}, (a), for the multiplier $T_M$.

The operator $T_M$ is self-adjoint in $L^2((0,\infty),\gamma_\alpha)$. Then, a duality argument (see \cite[Lemma 7.1]{MMS-end}) allows us to conclude for $T_M$ the property in Theorem~\ref{endpoints}~\ref{H1-L1-op}.

\subsection{Proof of Theorem~\ref{endpoints} for variation operators}\label{sec-var}

The $\rho$-variation operator $V_\rho(\{t^k\partial_t^k P_t^\alpha\}_{t>0})$, $k\in \mathbb{N}$, is bounded on $L^p((0,\infty),\gamma_\alpha)$, ${1<p<\infty}$ (\cite[Corollary~6.1]{LeMX1} and \cite[Theorem~3.3]{JR}) and from $L^1((0,\infty),\gamma_\alpha)$ into $L^{1,\infty}((0,\infty),\gamma_\alpha)$ (\cite[Theorem~1.1]{BdL}).

Assume $g\in C^1(0,\infty)$. If $0<t_N<t_{N-1}<\dots<t_1$ we have that
\begin{equation*}
    \begin{split}
        \left(
        \sum_{j=1}^{N-1} |g(t_j)-g(t_{j+1})|^\rho
        \right)^{1/\rho} & 
        \leq 
        \sum_{j=1}^{N-1} |g(t_j)-g(t_{j+1})|
        \leq \sum_{j=1}^{N-1} \left|\int_{t_{j+1}}^{t_j} g'(t)dt\right|
        \\ & \leq \int_0^\infty |g'(t)|dt.
    \end{split}
\end{equation*}
Then, 
\begin{equation*}
    V_\rho\left(\{g(t)\}_{t>0}\right) \leq 
    \int_{0}^\infty |g'(t)|dt.
\end{equation*}

Hence, for every $k\in\mathbb{N}$ we get
\begin{equation*}
\begin{split}
    V_\rho \left(\{ \partial_x t^k \partial_t^k P^\alpha_t(x,y)\}_{t>0}\right) 
    & \leq 
    \int_0^\infty | \partial_t (\partial_x t^k \partial_t^k P^\alpha_t(x,y))| dt
    \\ & \leq k \int_0^\infty
     | t^{k-1} \partial_x \partial_t^k P^\alpha_t(x,y)|dt\\
     &\quad +\int_0^\infty
     | t^{k} \partial_x \partial_t^{k+1} P^\alpha_t(x,y)|dt.
\end{split} 
\end{equation*}

According to Lemma~\ref{cotasderivadasPoisson} and proceeding as in the previous section, we obtain 
\begin{equation*}
    \begin{split}
        \int_0^\infty
     | t^{k} \partial_x \partial_t^{k+1} P^\alpha_t(x,y)| dt
     & \leq 
        C \int_0^\infty t^k \int_0^1  \frac{e^{\frac{t^2}{16\log r}}}{r (-\log r)^{(k+3)/2}}
    |\partial_x W^\alpha_{-2\log r}(x,y) | drdt
        \\ & \leq C
        \int_0^1 |\partial_x W^\alpha_{-2\log r}(x,y) | 
        \frac{dr}{r (-\log r)},
    \end{split}
\end{equation*}
for every $x,y\in(0,\infty)$, $x\neq y.$

The other term can be controlled by proceeding in a similar way.

Therefore, as in the last section, we obtain the conclusion of Theorem~\ref{endpoints}  for the $\rho$-variation operator $V_\rho \left(\{ \partial_x t^k \partial_t^k P^\alpha_t(x,y)\}_{t>0}\right)$.

\subsection{Proof of Theorem~\ref{endpoints} for fractional integrals}\label{sec-fracint}
Let $\omega>0$. Recall that the  integral kernel of the operator $\Delta_\alpha^{-\omega}$ is given by
\begin{align*}
    H^\omega_\alpha(x,y)& = 
    \frac{1}{\Gamma(\omega)} \int_0^1 
    \left( \int_{-1}^1 e^{-\frac{q(rx,y,s)}{1-r^2}+y^2} \Pi_\alpha(s) ds \frac{1}{(1-r^2)^{\alpha+1}} - 1
    \right)\\
    &\quad \times\frac{dr}{r (-\log r)^{1-\omega}},  
\end{align*}
for $x,y\in(0,\infty)$, $x\neq y$. Notice that $ H^\omega_\alpha(x,y) =  H^\omega_\alpha(y,x)$.

We can write, for $x,y\in(0,\infty)$, $x\neq y$,
\begin{equation*}
    \partial_x H^\omega_\alpha(x,y)  = 
    \frac{2}{\Gamma(\omega)} \int_0^1 \int_{-1}^1 e^{-\frac{q(rx,y,s)}{1-r^2}+y^2} 
    \frac{rx-ys}{(1-r^2)^{\alpha+2}}
    \frac{\Pi_\alpha(s) dsdr}{(-\log r)^{1-\omega}}
\end{equation*}
Then, according to~\ref{eq-D6} we get
\begin{equation*}
    |\partial_x H^\omega_\alpha(x,y)| \leq  C\int_0^1 \int_{-1}^1 e^{-\frac{q(rx,y,s)}{1-r^2}+y^2} \Pi_\alpha(s) ds
    \frac{(-\log r)^{\omega-1}}{(1-r^2)^{\alpha+3/2}}dr  
\end{equation*}

By using Lemma~\ref{lem3.1.1} and~\ref{eq-D0} we can find $C>0$ such that
\begin{equation*}
    \sup_{x\in I}r_I \int_{(2I)^c} |\partial_x H^\omega_\alpha(x,y)| d\gamma_\alpha(y) \leq C
\end{equation*}
for every $I\in \mathcal{B}_1$. Since $\Delta^{-\omega}$ is bounded in $L^2((0,\infty),\gamma_\alpha)$, the property that we have just proved, together with the symmetry of the kernel, allow us to obtain the desired conclusion using Theorem~\ref{criterion} and duality.

\subsection*{Statements and Declarations} 

\subsubsection*{Funding} The first author is partially supported by grant PID2019-106093GB-I00 from the Spanish Government. The second and fourth authors are partially supported by grants PICT-2019-2019-00389 (Agencia Nacional de Promoción Científica y Tecnológica) and CAI+D 2019-015 (Universidad Nacional del Litoral)

\subsubsection*{Competing Interests} The authors have no relevant financial or non-financial interests to disclose.

\subsubsection*{Author Contributions} All authors whose names appear on the submission made substantial contributions to the conception and design of the work, drafted the work and revised it critically, approved this version to be published, and agree to be accountable for all aspects of the work in ensuring that questions related to the accuracy or integrity of any part of the work are appropriately investigated and resolved.

\subsection*{Data availability} Data sharing not applicable to this article as no datasets were generated or analysed during the current study.

\bibliographystyle{acm}

\end{document}